\newtheorem{theorem}{Theorem}[section]    
\newtheorem{proposition}[theorem]{Proposition}  
\newtheorem{example-definition}[theorem]{Example-Definition} 
\newtheorem{corollary}[theorem]{Corollary} 
\theoremstyle{definition}
\newtheorem{definition}[theorem]{Definition}
\newtheorem{remark}[theorem]{Remark}  
\newtheorem{question}[theorem]{Question} 
\newtheorem*{remark-no-number}{Remark}             
\newtheorem*{definition*}{Definition}     
\newtheorem*{example*}{Example}     
\numberwithin{equation}{section}
\newcommand{\Int}{{\rm Int}}
\newcommand{\Q}{\mathbb{Q}}
\newcommand{\Z}{\mathbb{Z}}
\newcommand{\wS}{\widetilde S}
\newcommand{\wP}{\widetilde P}
\newcommand{\wphi}{\widetilde \phi}
\newcommand{\vphi}{\varphi}
\newcommand{\wvphi}{\widetilde\varphi}
\newcommand{\Diff}{{\rm Diff}^+}
\newcommand{\MCG}{{\mathcal MCG}}
\newcommand{\cL}{{\bf L}}
\title{On the fractional Dehn twist coefficients of branched coverings}
\author{Tetsuya Ito}
\address{Department of Mathematics, Kyoto University, Kyoto 606-8502, JAPAN}
\email{tetitoh@math.kyoto-u.ac.jp}
\author{Keiko Kawamuro}
\address{Department of Mathematics,   
The University of Iowa, Iowa City, IA 52242, USA}
\email{keiko-kawamuro@uiowa.edu}
\date{\today} 
\begin{document}

\begin{abstract}
We discuss how the fractional Dehn twist coefficient behaves under a fully ramified branched covering of an open book, and give  applications to both topological and contact 3-manifolds. Among them, we show that non-right-veering closed braids represent virtually loose transverse links.
\end{abstract}

\maketitle

\section{Introduction}

Let $S$ be an oriented compact surface with nonempty boundary and $\MCG(S)$ be the mapping class group. For a connected boundary component $C$ of $S$, the \emph{fractional Dehn twist coefficient} with respect to $C$ (FDTC, in short) is a map $c(-,C):\MCG(S) \rightarrow \Q$ which quantitates twists of the mapping classes along $C$ (see \cite{HKM} for the definition). 

Although the FDTC has very simple geometric meaning, it plays a fundamental role in the study of (contact) 3-manifolds. Here we give an incomplete list of properties of the FDTC that are relevant to this paper and demonstrate usefulness of the FDTC. 
Let $(S,\phi)$ be an open book and $(M_{(S,\phi)},\xi_{(S,\phi)})$ be the compatible contact 3-manifold. For simplicity we assume that $\partial S$ is connected and denote $c(\phi):=c(\phi, \partial S)$. 

\begin{itemize}
\item \cite{HKM} If $c(\phi)<0$ then $\xi_{(S,\phi)}$ is overtwisted.
\item \cite{HKM2} If $c(\phi)\geq 1$ and $\phi$ is pseudo-Anosov,  $\xi_{(S,\phi)}$ is a perturbation of a co-oriented taut foliation of $M_{(S,\phi)}$. Hence, it is universally tight and symplectically fillable.
\item \cite{CH} If $c(\phi)=k\slash p$ and $\phi$ is pseudo-Anosov such that $k\geq 2$ and $p$ is the number of prongs at the boundary, then $\xi_{(S,\phi)}$ is universally tight.
\item \cite{ik2} If $|c(\phi)| \geq 1$ then $M_{(S,\phi)}$ is hyperbolic (Seifert fibered, toroidal) if and only if $\phi$ is pseudo-Anosov (periodic, reducible). 
\item \cite{HM} $|c(\phi)|$ gives rise to an estimate of the rank of the reduced Heegaard Floer homology of $M_{(S,\phi)}$.
\item \cite{ik2} The FDTC of a closed braid gives an lower bound of the genus of the closed braid.
\item \cite{FH} The FDTC of a classical braid \cite{M} is a certain slope of the homogenization of the $\Upsilon$-invariant from Heegaard Floer homology theory \cite{OSS}.
\end{itemize}

The aim of this paper is to study behavior of the FDTC under branched coverings. For a closed braid $\cL$ in an open book $(S,\phi)$ 
we introduce a branched covering $\pi:(\widetilde{S},\widetilde{\phi}) \rightarrow (S,\phi)$ of the open book, branched along $\cL$. 
In light of Giroux correspondence, this yields a contact branched covering $(M_{(\widetilde{S},\widetilde{\phi})},\xi_{(\widetilde{S},\widetilde{\phi})}) \rightarrow (M_{(S,\phi)},\xi_{(S,\phi)})$ of the corresponding contact 3-manifold, branched along the transverse link represented by $\cL$.

Here is the main result where the relation of the FDTCs of $\wphi$, $\cL$ and $\widetilde \cL$ is given under the assumption of \emph{fully ramified} (Definition \ref{definition:fully-ramified}):

\noindent{\bf Theorem \ref{theorem:covering}.} {\em 
Assume that a branched covering $\pi: (\widetilde S, \widetilde P)\to (S, P)$ is fully ramified and $\chi(\widetilde{S})<0$. 
Suppose that  
$(\wS, \wphi)$ is a branched covering of $(S, \phi)$ along $\cL$. 
Let $\widetilde\cL$ be the lift of $\cL$. 
For a boundary component $C$ of $S$ let $\widetilde{C}$ be a connected component of the preimage $\pi^{-1}(C)$. 
Let $d(\pi, \widetilde{C})$ denote the degree of the covering $\pi|_{\widetilde{C}}: \widetilde{C} \rightarrow C$.
Then we have }
$$
c(\widetilde{\phi},\widetilde{C}) = c(\widetilde{\phi},\widetilde{\cL}, \widetilde{C}) =  c(\phi,\cL,C)/d(\pi,\widetilde{C}).
$$

The unexpected first equality is turned out to be a consequence of fully ramified assumption. 

Theorem \ref{theorem:covering} has numerous applications to both topological 3-manifolds and contact 3-manifolds. 
The following corollary states that the geometric type of the compliment of branch locus is carried over to the branched cover. 

\noindent{\bf Corollary
\ref{cor:overtwisted}.} {\em
Let $(\widetilde{S},\widetilde{\phi})$ be a fully ramified $d$-fold branched covering of $(S,\phi)$ branched along a closed braid $\cL$ with $\chi(\widetilde S) < 0$. 
Assume that 
\begin{enumerate}
\item[(a)] both $\partial S$ and $\partial \widetilde{S}$ are connected and $|c(\phi,\cL,\partial S)| > d,$ or
\item[(b)] $|c(\phi,\cL,C)| > 4 d(\pi,\widetilde{C})$ for every boundary component $C \subset \partial S$ and  connected component $\widetilde{C} \subset \pi^{-1}(C)$.
\end{enumerate}
If $\cL$ is Seifert-fibered (resp. toroidal, hyperbolic) then  $M_{(\widetilde{S},\widetilde{\phi})}$ is Seifert-fibered (resp. toroidal, hyperbolic).
}

Among the applications to contact geometry the following corollary relates right-veering braids and universally non-loose transverse links: 

\noindent{\bf Corollary
\ref{cor:virtually-loose2}.} {\em
Let $\mathcal{T}$ be a transverse link in a contact 3-manifold $(M,\xi)$. 
If $\mathcal{T}$ is universally non-loose then every closed braid representative of $\mathcal{T}$ with respect to every open book decomposition of $(M,\xi)$ is right-veering. }


Although tight contact 3-manifolds are supported by right-veering open books \cite{HKM}, non-loose transverse links are not necessarily represented by right-veering braids. In \cite{ik-qveer} a new property of closed braids called \emph{quasi-right-veering} is  introduced and it is shown that quasi-right-veering property characterizes non-loose transverse links.
This poses a natural question: \emph{What is the property of transverse links that corresponds to right-veering-ness of closed braids?}
Corollary~\ref{cor:virtually-loose2} suggests the answer would be ``universally non-loose''
(see Question \ref{question:main}).

\section*{Acknowledgement}
The authors would like to thank John Etnyre for many useful comments.
TI was partially supported by JSPS KAKENHI grant number 15K17540 and 16H02145.
KK was partially supported by NSF grant DMS-1206770 and Simons Foundation Collaboration Grants for Mathematicians.

\section{Preliminaries}\label{section:closed-braid}

In this section we briefly review the definition of the FDTC for closed braids in open books. For detail we refer the reader to \cite{ik-qveer}.

Let $S = S_{g,d}$ be an oriented compact surface with genus $g$ and $d$ boundary components.  Throughout the paper we assume that $d>0$. 
Let $P = \{p_1,\ldots,p_n\}$ be a (possibly empty) set of $n$ distinct interior points of $S$. We denote by $\Diff(S, P, \partial S)$ (or, $\Diff(S, \partial S)$ if $P$ is empty) the group of orientation preserving diffeomorphisms of $S$ that fix $P$ setwise and $\partial S$ pointwise. Let $\MCG(S,P)$ (denoted by $\MCG(S)$ if $P$ is empty) be the mapping class group of the punctured surface $S \setminus P$, which is the group of isotopy classes of  $\Diff(S, P, \partial S)$.

In the following discussion we distinguish a diffeomorphism and its mapping class. 
By ``\emph{abstract open book}'' we mean a pair $(S,\varphi)$ of  surface $S$ and diffeomorphism $\varphi \in \Diff(S,\partial S)$, and by ``\emph{open book}'' we mean a pair $(S,\phi)$ of surface $S$ and mapping class $\phi \in \MCG(S)$.

Given an abstract open book $(S,\varphi)$ we define a contact 3-manifold $(M_{(S,\varphi)},\xi_{(S,\varphi)})$ as a quotient space: 
\begin{equation}
\label{eqn:oepn-book-manifold} 
M_{(S, \varphi)} := S \times[0,1] \slash \sim , \quad \begin{cases}
(x,1)\sim(\varphi(x),0) & \mbox{for } x \in S,\\
(x,t)\sim (x,s) & \mbox{for } x \in \partial S \mbox{ and }  t,s \in [0,1].
\end{cases}
\end{equation} 
Let $\mathcal Q: S\times[0,1] \rightarrow M_{(S,\varphi)}$ denote the quotient map. We call $B:=\mathcal Q (\partial S \times \{t\})$ the \emph{binding} and $S_t:=\mathcal Q(S\times\{t\})$ a \emph{page}. 
Let $pr:S\times[0,1]\to S; \ (x, t)\mapsto x$ be a projection map and define $p:= pr \circ \mathcal Q^{-1}: M_{(S, \vphi)}\to S$. The restriction $p|_{S_t}: S_t\to S$ is a diffeomorphism.

Let $\xi_{(S,\varphi)}$ be a contact structure on $M_{(S, \varphi)}$ obtained by Thurston and Winkelnkemper's construction \cite{TW}, which is supported by the abstract open book. That is, there exists a contact form $\alpha$ such that $\xi=\ker \alpha$, $d\alpha$ is a volume form of every page $S_t$ and $\alpha>0$ on the binding $B$. A contact structure supported by $(S,\varphi)$ is unique up to isotopy due to Giroux \cite{gi}.

A \emph{closed $n$-braid $L$ with respect to an abstract open book $(S,\varphi)$} is an oriented link in $M_{(S,\varphi)} \setminus B$ such that $L$ intersects every page $S_t$ positively and transversely at $n$ points. If we want to emphasize the underlying abstract open book $L$ is denoted by the triple $((S,\varphi),L)$. 
We will allow $L=\emptyset$ the empty braid.

Let $P= p(L \cap S_0)$. To describe a closed braid with a diffeomorphism we assume that $\vphi$ preserves the set $P$;
\begin{equation}
\label{eqn:assumptionweak} \varphi(P)=P.
\end{equation}
We may further assume the following stronger condition: 
We fix a collar neighborhood $\nu(\partial S)$ of the boundary $\partial S$ and assume that 
\begin{equation}
\label{eqn:assumption} \varphi|_{\nu(\partial S)}=id \quad \mbox{ and } \quad P \subset \nu(\partial S).
\end{equation}
Clearly (\ref{eqn:assumption}) subsumes (\ref{eqn:assumptionweak}). 
However, (\ref{eqn:assumption}) is a natural condition because given a closed braid $((S, \vphi), L')$ with $\varphi|_{\nu(\partial S)}=id$ there exists $((S, \vphi), L)$ that is braid isotopic to $((S, \vphi), L')$ and satisfies $P \subset \nu(\partial S).$



There is a diffeomorphism $\varphi_L \in \Diff(S,P,\partial S)$ (unique up to isotopy) 
such that $f([\vphi_L])=[\vphi]$ under the forgetful map 
$f:\MCG(S,P) \to \MCG(S)$ and 
\begin{equation}
\label{eqn:distinguished}
(M_{(S,\varphi_L)}, L) = \left( (S, P) \times [0,1] \right) / \sim, \quad \begin{cases}  
(x,1)\sim(\varphi_L(x),0) & \mbox{for } x \in S,\\
(x,t)\sim (x,s) & \mbox{for } x \in \partial S \mbox{ and }  t,s \in [0,1].
\end{cases}
\end{equation}

Both $\vphi$ and $\vphi_L$ are in $\Diff(S, P, \partial S)$ but $\vphi\neq \vphi_L$ in general if $L$ is non-trivial.  Indeed, $\vphi_L$ may permute the points of $P$ but $\vphi$ fixes $P$ pointwise due to the assumption (\ref{eqn:assumption}). 
However, since $\vphi_L$ and $\vphi$ are isotopic within $\Diff(S,\partial S)$, an isotopy between $\vphi_L$ and $\vphi$ gives a page-preserving diffeomorphism between $M_{(S, \vphi_L)}$ and $M_{(S, \vphi)}$.

Construction of $\vphi_L$ from $\vphi$ and $L$ is given in Section 2 of \cite{ik-qveer}, where only the weaker condition (\ref{eqn:assumptionweak}) is essentially used as mentioned in \cite[Remark 2.8]{ik-qveer}.  

\begin{definition}
The \emph{distinguished monodromy} of a closed braid $((S,\varphi),L)$ satisfying (\ref{eqn:assumptionweak}) is the mapping class $[\varphi_L] \in \MCG(S,P)$. 
\end{definition}

For abstract open books $(S,\varphi)$ and $(S',\varphi')$, if $S=S'$ and $[\varphi]=[\varphi'] \in \MCG(S)$, from an isotopy between $\varphi$ and $\varphi'$ one obtains a diffeomorphism
$\rho: M_{(S,\varphi)} \rightarrow M_{(S',\varphi')}$ 
that preserves the pages; $\rho(S_t)=S'_t$. 
See \cite[Section 3]{ik-qveer} for detail. 

\begin{definition}[Closed braid in open book]
Two closed braids $((S,\varphi),L)$ and $((S',\varphi'),L')$ are called \emph{equivalent}
if  $S=S'$, $[\varphi]=[\varphi'] \in \MCG(S)$, and $\rho(L)$ is isotopic to $L'$ thorough closed braids in $(S,\varphi')$. 
Let $\phi=[\varphi]\in\MCG(S)$. 
The equivalence class $\cL=[L]=[((S,\varphi),L)]$ of a closed braid $L=((S, \varphi), L)$ is called a {\em closed braid} in $(S,\phi)$
\end{definition}
 
To compare the distinguished monodromies of equivalent closed braids, we use the following:

\begin{definition}[Point-changing isomorphism]\label{def:point-changing}
Let $P,P'$ be finite subsets of $S$ of the same cardinality.
A \emph{point-changing isomorphism} is an isomorphism $\Theta:\MCG(S,P) \rightarrow \MCG(S,P') $ defined by $\Theta([\psi])= [\theta^{-1} \circ \psi \circ \theta] $, where  $\theta: (S, P')\to (S, P)$ is an orientation-preserving diffeomorphism such that $\theta|_{\partial S}=id$ and that $\theta$ is isotopic to $id_S$ if we forget the marked points of $P$ and $P'$. 
\end{definition}

\begin{theorem}
\label{theorem:distinguished-monodromy}
Let $((S,\varphi),L)$ and $((S,\varphi'),L')$ be equivalent closed braids that satisfy the condition (\ref{eqn:assumptionweak}). 
Put $P:=p(S_0 \cap L)$, $P':=p(S_0\cap L')$. Then there is a point-changing isomorphism $\Theta:\MCG(S,P) \rightarrow \MCG(S,P')$ such that $\Theta([\varphi_L])=[\varphi'_{L'}]$. 
\end{theorem}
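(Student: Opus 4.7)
The plan is to reduce the equivalence between $((S,\vphi),L)$ and $((S,\vphi'),L')$ to two elementary moves and track the distinguished monodromy through each. Since $[\vphi]=[\vphi']$ in $\MCG(S)$, there is an isotopy $\{\vphi_s\}_{s\in[0,1]}\subset\Diff(S,\partial S)$ with $\vphi_0=\vphi$ and $\vphi_1=\vphi'$, which gives rise to a smooth family of page-preserving diffeomorphisms $\rho_s:M_{(S,\vphi)}\to M_{(S,\vphi_s)}$ with $\rho_1=\rho$. Then $\rho(L)$ is braid isotopic to $L'$ in $(S,\vphi')$, say via a closed braid isotopy $\{L^{(s)}\}_{s\in[1,2]}$ with $L^{(1)}=\rho(L)$ and $L^{(2)}=L'$. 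Thus the problem reduces to understanding (A) how the distinguished monodromy changes as the monodromy is isotoped while the braid is dragged along by $\rho_s$, and (B) how the distinguished monodromy changes under a closed braid isotopy with the monodromy held fixed at $\vphi'$.

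For move (A), let $L_s:=\rho_s(L)$ and $P_s:=p(S_0\cap L_s)$. Since each $\rho_s$ is page-preserving, the map $p\circ(\rho_s|_{S_0})\circ p^{-1}$ defines a smooth family of diffeomorphisms $\theta^{(1)}_s:(S,P_s)\to(S,P)$ that fix $\partial S$ pointwise and are isotopic to $\mathrm{id}_S$ as diffeomorphisms of $S$ when marked points are forgotten, because $\rho_s$ is manufactured from an isotopy in $\Diff(S,\partial S)$. For move (B), the plan is to promote the braid isotopy $\{L^{(s)}\}$ to a page-preserving ambient isotopy $\{h_s\}$ of $M_{(S,\vphi')}$ fixing the binding, and to read off a diffeomorphism $\theta^{(2)}:(S,P')\to(S,P_1)$, again isotopic to $\mathrm{id}_S$, by restricting $h_1$ to $S_0$ and identifying via $p$. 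Setting $\theta:=\theta^{(1)}_1\circ\theta^{(2)}$ yields an admissible $\theta$ in the sense of Definition \ref{def:point-changing}, hence a point-changing isomorphism $\Theta:\MCG(S,P)\to\MCG(S,P')$.

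The last step is to verify $\Theta([\vphi_L])=[\vphi'_{L'}]$. Here I will use the characterizing property of the distinguished monodromy: $\vphi_L\in\Diff(S,P,\partial S)$ is determined up to isotopy by the requirement that the mapping torus quotient (\ref{eqn:distinguished}) reproduces the pair $(M_{(S,\vphi)},L)$ and that $[\vphi_L]$ maps to $[\vphi]$ under the forgetful map $f:\MCG(S,P)\to\MCG(S)$. Transporting this quotient through the composition $\rho\circ h_1$ using $\theta$ should produce $(M_{(S,\vphi')},L')$ from the conjugate $\theta^{-1}\circ\vphi_L\circ\theta$, and since this conjugate also descends to $[\vphi']$, uniqueness forces it to be isotopic in $\Diff(S,P',\partial S)$ to $\vphi'_{L'}$. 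The main technical obstacle is step (B): a generic isotopy of closed braids does not preserve pages. To make it page-preserving, one uses that each $L^{(s)}$ is transverse to every page $S_t$, so that $L^{(s)}\cap S_t$ defines a smooth family of $n$-point configurations in $S_t$ parametrized by $(s,t)$ and constant near the binding; then an application of the isotopy extension theorem on each page, together with local sections of the configuration-space fibration $\Diff(S,\partial S)\to\mathrm{Conf}_n(S)$, produces page-by-page diffeomorphisms which assemble into the desired ambient isotopy $h_s$.
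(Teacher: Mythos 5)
The paper itself gives no proof of Theorem~\ref{theorem:distinguished-monodromy} (the construction of $\varphi_L$ and its properties are deferred to \cite{ik-qveer}), so there is nothing in-text to compare against; your two-move decomposition --- (A) isotope the monodromy and drag the braid along by the page-preserving maps $\rho_s$, (B) extend the closed-braid isotopy to a page-preserving ambient isotopy of $M_{(S,\varphi')}$ --- is the standard and correct architecture, and the assembly of the candidate $\theta$ is fine modulo a direction slip ($p\circ(\rho_s|_{S_0})\circ p^{-1}$ maps $(S,P)$ to $(S,P_s)$, so Definition~\ref{def:point-changing} wants its inverse).

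The gap is in your final verification. The ``characterizing property'' you invoke --- that $\varphi_L$ is determined \emph{up to isotopy in} $\Diff(S,P,\partial S)$ by $f[\varphi_L]=[\varphi]$ together with the requirement that the quotient (\ref{eqn:distinguished}) reproduces the pair $(M_{(S,\varphi)},L)$ --- is false. If $\varphi_L$ has this property, so does $g^{-1}\circ\varphi_L\circ g$ for any $g\in\Diff(S,P,\partial S)$ isotopic to $id_S$ in $\Diff(S,\partial S)$: the map $(x,t)\mapsto(g^{-1}(x),t)$ descends to a page-preserving diffeomorphism of the quotients carrying the image of $P\times[0,1]$ to itself. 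For $g$ a point-pushing map along a suitable loop this conjugate is not isotopic to $\varphi_L$ rel $P$. In other words, the quotient construction pins down $[\varphi_L]$ only up to point-changing isomorphism, which is essentially the statement being proved, so appealing to it as a known uniqueness is circular. What your transport argument actually needs is the following lemma, which you should state and prove: if $\Psi$ is a page-preserving diffeomorphism between two braided open books, written as $\Psi(x,t)=(g_t(x),t)$ on $S\times[0,1]$, then the gluing forces $g_0\circ\psi=\psi'\circ g_1$; since in the presentations (\ref{eqn:distinguished}) both braids are the constant strands $P\times[0,1]$ and $P'\times[0,1]$, the path $\{g_t\}$ is an isotopy of maps $(S,P)\to(S,P')$, so $[g_0]=[g_1]$ as marked mapping classes and hence $[\psi']=[g_0][\psi][g_0]^{-1}$, i.e.\ the distinguished monodromies differ exactly by the point-changing isomorphism with $\theta=g_0^{-1}$ (admissible because your $\Psi=h_1\circ\rho$ restricts on $S_0$ to a map isotopic to $id_S$ rel $\partial S$ after forgetting marked points). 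Supplying this lemma --- in particular checking that $\Psi$ really intertwines the two mapping-torus presentations of $(M_{(S,\varphi)},L)$ and $(M_{(S,\varphi')},L')$, which is the only place $\varphi_L$ and $\varphi'_{L'}$ enter --- closes the argument; without it the last step does not follow.
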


By the \emph{distinguished monodromy} of a closed braid $\cL=[((S,\varphi),L)]$ in the open book $(S,\phi)$, we mean a point-changing isomorphism class of $[\vphi_L]$. When $P=P'$, a point-changing isomorphism is an inner automorphism of $\MCG(S,P)$. In this sense ``up to point-changing isomorphism'' can be understood as a generalization of ``up to conjugation'' to the case where marked points are different.

\begin{definition}[FDTC and right-veering]
\label{definition:FDTC-closed-braid}  

Given $((S,\varphi'),L')$ there exists $((S,\varphi),L)$ that is equivalent to $((S,\varphi'),L')$ and satisfies the condition (\ref{eqn:assumptionweak}).  
\begin{itemize}
\item
The {\em FDTC of the closed braid} $\cL=[((S,\varphi'),L')]$ in the open book $(S,\phi)$ with respect to the boundary component $C$ of $S$ is  the FDTC $c([\varphi_L],C)$ of the distinguished monodromy $[\varphi_L]$. 
We denote it by $c(\phi, \cL, C)$. 
\item
The closed braid $\cL=[((S,\varphi),L)]$ in open book $(S,\phi)$ is {\em right-veering} if 
$\vphi_L\in\Diff(S, P, \partial S)$ 
is right-veering in the sense of Honda-Kazez-Mati\'c \cite{HKM}.  
\end{itemize}
\end{definition}

If closed braids $((S,\varphi_1),L_1)$ and $((S,\varphi_2),L_2)$ satisfying (\ref{eqn:assumptionweak}) are equivalent then by Theorem~\ref{theorem:distinguished-monodromy} we get $c([{\varphi_1}_{L_1}],C)=c([{\varphi_2}_{L_2}],C)$ and 
${\vphi_1}_{L_1}$ is right-veering if and only if ${\vphi_2}_{L_2}$ is right-veering; thus well-definedness of Definitions \ref{definition:FDTC-closed-braid} follows.

In Section~\ref{section:right-veering-vs-loose} we study applications to contact topology and use the following terms:  
For a contact 3-manifold $(M,\xi)$ we say that an open book $(S,\phi)$ is an {\em open book decomposition} of $(M,\xi)$ if there is an abstract open book $(S,\varphi)$ with $[\varphi]=\phi$ and a contactomorphism $\theta:(M,\xi) \rightarrow (M_{(S,\varphi)},\xi_{(S,\varphi)})$. 
For a transverse link $\mathcal T$ in $(M, \xi)$ a closed braid $\cL=[((S,\varphi),L)]$ is called a {\em braid representative} of $\mathcal{T}$ if $\theta(\mathcal{T}) =L$.

\section{Branched coverings of open books}

In this section we discuss a branched covering of open books along a closed braids.
Note that Casey \cite[Theorem 3.4.1]{C} has studied branch coverings of a special case where $(S, \phi)=(D^2, id)$ and $(M, \xi)=(S^3, \xi_{std})$. See also Giroux's work \cite[Corollarie 5]{gi}.

Let $L$ be a closed $n$-braid with respect to an abstract open book $(S, \varphi)$ satisfying the condition (\ref{eqn:assumption}). Put $P:=p(L\cap S_0)\subset S$. 
Let $\pi: \widetilde{S} \rightarrow S$ be a  branched covering of $S$ branched at the $n$ points $P$. 
Throughout the paper, covering spaces are assumed to be connected. 
We put $\widetilde{P} :=\pi^{-1}(P)$ and
we may denote
$\pi: (\widetilde S, \widetilde P)\to (S, P)$ abusing the notation.

\begin{definition}
\label{definition:branch-openbook}
We say that an abstract open book $(\widetilde{S},\widetilde{\varphi})$ is a \emph{branched covering of $(S, \varphi)$ along $L$} and denote it by $\pi_L: (\wS, \wvphi)\to(S, \vphi)$, if $\wvphi(\widetilde{P})=\wP$ and the diagram 
\[
\xymatrix{
(\widetilde{S}, \widetilde{P}) \ \ar[r]^{\wvphi} \ar[d]_{\pi}& (\widetilde{S}, \widetilde{P})  \ar[d]^{\pi}\\(S, P) \ar[r]^{\varphi_{L}}& (S, P)
} \]
commutes, i.e., $\pi \circ \wvphi = \varphi_L \circ \pi$. 
\end{definition}

In the following, we will usually view $\wvphi$ as an element of $\Diff(\wS, \wP, \partial \wS)$; hence, $[\wvphi]$ is an element of $\MCG(\wS, \wP)$. 
When we forget the marked points of $\wP$ and see $\wvphi$ as an element of $\Diff(\wS, \partial \wS)$, its mapping class is denoted by $f[\wvphi]:=f([\wvphi]) \in\MCG(\wS)$ where $f$ is the forgetful map $f:\MCG(\wS, \wP)\to\MCG(\wS)$.

In Definition \ref{definition:branch-openbook}, $\pi \circ \widetilde\vphi = \varphi \circ \pi$ is not required. However, when $L=\emptyset$ the branched covering $\pi: (\widetilde S, \widetilde P)\to (S, P)$ is a usual covering and we have $\vphi_L=\vphi$, so $\pi \circ \widetilde\vphi = \varphi \circ \pi$.

When $(\widetilde{S},\widetilde{\varphi})$ is a branched covering of $(S, \varphi)$ along $L$, 
the map $\pi\times id_{[0,1]}: \widetilde S\times[0,1] \to S\times[0,1]$  
induces a branch covering of the 3-manifold along the closed braid $L \subset M_{(S, \vphi_L )}$ via (\ref{eqn:distinguished});  
$$ \pi: M_{(\widetilde S, \wvphi)} \to M_{(S, \vphi_L)},$$ 
where we again abuse the notation $\pi$. 
The pre-image $\widetilde{L}:=\pi^{-1}(L) \subset M_{(\widetilde S, \wvphi)}$ is a closed braid in the abstract open book $(\widetilde{S},\widetilde{\varphi})$. We call $\widetilde{L}$ the \emph{lift} of $L$.  
We remark that $((\widetilde S, \wvphi), \widetilde{L})$ may not satisfy the condition  (\ref{eqn:assumption}) but it does satisfy (\ref{eqn:assumptionweak}) which is enough to define the distinguished monodromy $[\wvphi_{\widetilde L}]\in \MCG(\widetilde S, \widetilde P)$ and it satisfies 
$$[\wvphi_{\widetilde L}] = [\wvphi]\in \MCG(\widetilde S, \widetilde P).$$

Let $(M,\xi)$ be a contact 3-manifold and $\mathcal{T} \subset (M,\xi)$ be a transverse link.
For any branched covering $\pi: \widetilde M \to M$ along $\mathcal{T}$, the differential $d\pi$ has $\dim({\rm im}\ d\pi)=1$ along the branch locus $\widetilde{\mathcal T}:= \pi^{-1}(\mathcal T)$. 
This means for any contact structure $\eta$ on $\widetilde M$ the pushforward $\pi_*(\eta)$ cannot be a contact structure on $M$. 
However, in \cite{Geiges} Geiges shows that $\widetilde M$ admits a contact structure $\widetilde \xi$ whose pushforward {\em outside} a neighborhood of $\widetilde{\mathcal T}$ coincides with $\xi$ and moreover $\widetilde{\mathcal T}$ is a transverse link with respect to $\widetilde\xi$.   

\begin{definition}
For the above branched covering $\pi: \widetilde M \to M$, we say that  $(\widetilde M, \widetilde\xi)$ is a {\em contact branched covering} of $(M, \xi)$ along $\mathcal T$ 
and denoted by 
$$\pi: (\widetilde{M},\widetilde{\xi})\to(M, \xi)$$ abusing the notation of $\pi$. Such a contact structure $\widetilde \xi$ is unique up to isotopy \cite{ON}. 
\end{definition}

\begin{definition}\label{def:equivalence2}
Let $((S, \vphi_0), L_0)$ and $((S, \vphi_1), L_1)$ be equivalent closed braids satisfying  (\ref{eqn:assumption}). 
For $i=0,1$, let 
$$\pi_{L_i}:=(\pi_i)_{L_i}:(\wS_i, \wvphi_i)\to (S, \vphi_i)$$ 
be a branched covering along $L_i$ such that $\pi_i \circ \wvphi_i = (\varphi_i)_{L_i} \circ \pi_i$.
We say that the branched coverings $\pi_{L_0}$ and $\pi_{L_1}$ are \emph{equivalent} if there exist diffeomorphisms $g:(S, P_0)\to(S, P_1)$ and $\tilde g: (\wS_0, \widetilde P_0)\to(\wS_1, \widetilde P_1)$ such that $g$ is isotopic to $id_S$ if we forgot the marked points $P_0$ and $P_1$ and the following diagram commutes: 
\begin{equation}
\label{eqn:cdiagram}
\xymatrix{ 
 & (\widetilde{S}_0,\widetilde{P}_0) \ar[ld]_{\wvphi_0} \ar[dd]_(.4){\pi_{L_0}} \ar[rr]^{\widetilde{g}} &  & (\widetilde{S}_1,\widetilde{P_1}) \ar[ld]^{\wvphi_1}\ar[dd]^(.4){\pi_{L_1}}\\
(\widetilde{S}_0,\widetilde{P}_0) \ \ar[rr]_(.6){\widetilde g} \ar[dd]_{\pi_{L_0}}& & (\widetilde{S}_1,\widetilde{P}_1) \ar[dd]_(.3){\pi_{L_1}} &\\
& (S,P_0) \ar[ld]_{(\varphi_0)_{L_0}} \ar[rr]^(.6){g}&  &(S,P_1) \ar[ld]^{(\varphi_1)_{L_1}}\\
 (S,P_0) \ar[rr]^{g}& &(S,P_1)&
}
\end{equation}
\end{definition}

\begin{definition}\label{def2} 

An open book $(\wS, \wphi)$ is a {\em branched covering of $(S, \phi)$ along a closed braid $\cL$} if there exist a closed braid $((S, \vphi), L)$ that represents $\cL$ and satisfies (\ref{eqn:assumptionweak}), a branched covering $\pi: (\widetilde S, \widetilde P)\to (S, P)$, a diffeomorphism $\wvphi\in\Diff(\wS,\wP, \partial \wS)$ such that $f[\wvphi]=\wphi$ and $\pi_L: (\wS, \wvphi) \to (S, \vphi)$ is a branched covering along $L$.  
In this case, we say that the branched covering is {\em represented by} $\pi_L: (\wS, \wvphi) \to (S, \vphi)$.

The closed braid $\widetilde{\cL}:=[((\widetilde{S},\widetilde{\varphi}),\widetilde{L})]$ in $(\widetilde{S}, \wphi)$ is called a \emph{lift} of $ \cL$.  
\end{definition}

Equivalent branched coverings $\pi_{L_0}$ and $\pi_{L_1}$ of open books give rise to the same branched coverings of (contact) $3$-manifolds.

\begin{proposition}
\label{prop:well-defined}
Assume that $\pi_{L_0}$ and $\pi_{L_1}$ are equivalent.
\begin{itemize}
\item[(i)] The diffeomorphisms $g$ and $\widetilde{g}$ naturally give page-preserving diffeomorphisms of 3-manifolds $G: M_{(S, (\vphi_0)_{L_0})} \to M_{(S, (\vphi_1)_{L_1})}$ with $g(L_0)= L_1$, and $\widetilde G: M_{(\wS_0, \wvphi_0)} \to M_{(\wS_1, \wvphi_1)}$ with $\widetilde G(\widetilde L_0)=\widetilde L_1$, such that the following diagram commutes. 
$$\xymatrix{
M_{(\wS_0,\wvphi_0)} \ \ar[r]^{\widetilde{G}} \ar[d]_{\pi_0}& M_{(\wS_1,\wvphi_1)}  \ar[d]^{\pi_1}\\M_{(S, (\vphi_0)_{L_0})} \ar[r]^{G}& M_{(S, (\vphi_1)_{L_1})}
}
$$
\item[(ii)]
Moreover, for $i=0,1$ we can take a contact structure $\xi_{(S, (\vphi_i)_{L_i})}$ on $M_{(S, (\vphi_i)_{L_i})}$ supported by $(S, (\vphi_i)_{L_i})$ so that $G$ becomes a contactomorphism: 
$$
G:(M_{(S, (\vphi_0)_{L_0})},\xi_{(S,(\vphi_0)_{L_0})}) \to (M_{(S, (\vphi_1)_{L_1})}, \xi_{(S, (\vphi_1)_{L_1})})
$$
For $i=0,1$ let $$\pi_i:(M_{(\wS_i, \wvphi_i)}, \widetilde\xi_i)\to (M_{(S, (\vphi_i)_{L_i})},  \xi_{(S, (\vphi_i)_{L_i})})$$ be a contact branched covering along $L_i$ such that the pushforward ${\pi_i}_*\widetilde\xi_i$ coincides with $\xi_{(S, (\vphi_i)_{L_i})}$ away from the branch locus. 
With a small perturbation of $\widetilde\xi_i$ near the branch locus if needed,  
\begin{itemize}
\item
$\widetilde\xi_i$ is supported by $(\wS_i, \wvphi_i)$; thus we denote $\widetilde\xi_i$ by $\xi_{(\wS_i, \wvphi_i)}$, 
\item
$\widetilde G$ becomes a contactomorphism, and 
\item
the following diagram commutes away from the branch loci. 
\end{itemize}
\begin{equation*}\label{eq:diagramG}
\xymatrix{
(M_{(\wS_0,\wvphi_0)},\xi_{(\wS_0,\wvphi_0)}) \ \ar[r]^{\widetilde{G}} \ar[d]_{\pi_0}& (M_{(\wS_1,\wvphi_1)},\xi_{(\wS_1,\wvphi_1)})  \ar[d]^{\pi_1}\\
(M_{(S, (\vphi_0)_{L_0})},\xi_{(S,(\vphi_0)_{L_0})}) \ar[r]^{G}& (M_{(S, (\vphi_1)_{L_1})}, \xi_{(S, (\vphi_1)_{L_1})} )
}
\end{equation*}

\item[(iii)]
The distinguished monodromies $[{\wvphi_{0}} {}_{\widetilde L_0}] = [\wvphi_{0}] \in \MCG(\wS_0,\wP_0)$ and $[{\wvphi_{1}} {}_{\widetilde L_1}] = [\wvphi_{1}] \in \MCG(\wS_1,\wP_1)$ are related by the point-changing isomorphism $$\tilde{\Theta}: \MCG(\wS_0,\wP_0) \rightarrow \MCG(\wS_1,\wP_1)$$ given by $\tilde{\Theta}([\wvphi_0]) =[\widetilde{g}\circ\wvphi_0 \circ\widetilde{g}^{-1}]$. 
\item[(iv)]
The FDTC, the right-veering property, the Nielsen-Thurston type of $\widetilde{\cL}$ and $\wphi$ are well-defined for the equivalence class of $\pi_L:(\wS, \wvphi)\to(S, \vphi)$.
\end{itemize}
\end{proposition}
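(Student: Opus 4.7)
The plan is to lift the commutative cube (\ref{eqn:cdiagram}) step by step: first to mapping tori to obtain $G$ and $\widetilde G$, then to supported contact structures via Giroux and Geiges, and finally to the mapping class groups via a point-changing isomorphism induced by $\widetilde g$.

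For part (i) I would simply set $G([x,t]):=[g(x),t]$ and $\widetilde G([\widetilde x,t]):=[\widetilde g(\widetilde x),t]$. The bottom face of (\ref{eqn:cdiagram}) reads $g\circ(\varphi_0)_{L_0}=(\varphi_1)_{L_1}\circ g$ and $g|_{\partial S}=\mathrm{id}$, so both identifications in (\ref{eqn:oepn-book-manifold}) are respected; the same check using the top face produces $\widetilde G$, and the vertical faces give $\pi_1\circ\widetilde G=G\circ\pi_0$. Because $g(P_0)=P_1$ and the braids live in $p^{-1}(\nu(\partial S))$ by (\ref{eqn:assumption}), page preservation forces $G(L_0)=L_1$, and lifting by $\pi$ gives $\widetilde G(\widetilde L_0)=\widetilde L_1$.

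For part (ii), take any supported contact structure $\xi_{(S,(\varphi_0)_{L_0})}$ and define $\xi_{(S,(\varphi_1)_{L_1})}:=G_*\xi_{(S,(\varphi_0)_{L_0})}$; since $G$ preserves pages and binding, the pushed-forward Thurston--Winkelnkemper form continues to satisfy the Giroux compatibility for $(S,(\varphi_1)_{L_1})$, so this is a bona fide supported contact structure and $G$ is automatically a contactomorphism (any other choice is isotopic by \cite{gi}). Upstairs, take a Geiges contact structure $\widetilde\xi_0$ on $M_{(\widetilde S_0,\widetilde\varphi_0)}$ \cite{Geiges}; then $\widetilde G_*\widetilde\xi_0$ and any a priori chosen $\widetilde\xi_1$ are both contact branched coverings of $(M_{(S,(\varphi_1)_{L_1})},\xi_{(S,(\varphi_1)_{L_1})})$ along $L_1$, hence isotopic by the uniqueness of \cite{ON}. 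A further Giroux-type perturbation supported near the branch locus then arranges each $\widetilde\xi_i$ to be supported by $(\widetilde S_i,\widetilde\varphi_i)$. The main obstacle here is choosing these perturbations compatibly enough that the upper square commutes off the branch locus without wrecking the support condition; the necessary flexibility is already built into the constructions in \cite{Geiges,ON}.

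For part (iii), the top face of (\ref{eqn:cdiagram}) is $\widetilde g\circ\widetilde\varphi_0=\widetilde\varphi_1\circ\widetilde g$, and $\widetilde g$ lifts $g$; lifting an isotopy of $g$ to $\mathrm{id}_S$ (with marked points forgotten) through the branched cover shows that $\widetilde g$ is isotopic, after forgetting $\widetilde P_0,\widetilde P_1$, to a deck transformation, so conjugation by $\widetilde g$ descends to a well-defined point-changing isomorphism between the relevant mapping class groups. Hence $\widetilde\Theta([\psi]):=[\widetilde g\circ\psi\circ\widetilde g^{-1}]$ sends $[\widetilde\varphi_0]$ to $[\widetilde\varphi_1]$, and the identification $[\widetilde\varphi_i{}_{\widetilde L_i}]=[\widetilde\varphi_i]$ recorded just above Definition \ref{def2} closes (iii). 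Part (iv) is then immediate: the FDTC, the right-veering property, and the Nielsen--Thurston type are invariants of conjugation in $\MCG(\widetilde S,\widetilde P)$, hence invariants under $\widetilde\Theta$; the Nielsen--Thurston type of $\widetilde\phi\in\MCG(\widetilde S)$ is similarly preserved after pushing through the forgetful map $f$.
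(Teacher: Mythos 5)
Your proposal is correct and follows essentially the same route as the paper: $G$ and $\widetilde G$ are induced by $g\times\mathrm{id}$ and $\widetilde g\times\mathrm{id}$ on the product before taking the quotient (\ref{eqn:oepn-book-manifold}), the contact structure downstairs is transported by $G_*$ with the upstairs statement handled by Geiges' construction and the uniqueness of contact branched covers, and (iii)--(iv) are read off from the commutative cube (\ref{eqn:cdiagram}). Your write-up merely supplies more detail than the paper's (quite terse) argument, in particular on why conjugation by $\widetilde g$ qualifies as a point-changing isomorphism.
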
 
\begin{proof}

To see (i), recall that for an abstract open book $(S,\vphi)$, the corresponding 3-manifold $M_{(S,\vphi)}$ is a quotient space of $S\times[0,1]$ by (\ref{eqn:oepn-book-manifold}). Let us consider diffeomorphisms $g\times id: S \times [0,1] \rightarrow S\times[0,1]$ and $\widetilde{g}\times id: \wS \times [0,1] \rightarrow \wS\times[0,1]$. By commutativity of the diagram (\ref{eqn:cdiagram}) $g$ and $\widetilde{g}$ induce desired diffeomorphisms $G$ and $\widetilde{G}$.

To show (ii), fix a contact structure $\xi_{(S, (\vphi_0)_{L_0})}$ and define $\xi_{(S, (\vphi_1)_{L_1})}:= G_*\xi_{(S, (\vphi_0)_{L_0})}$.
The rest of the statement follows from (i) and Geiges' construction \cite{Geiges}.  


(iii) follows from the commutative diagram (\ref{eqn:cdiagram}) and (iv) is a consequence of (iii).
\end{proof}

\begin{definition}
With Proposition~\ref{prop:well-defined} in mind, 
we say that branched covers $(\pi_i)_{L_i}:(\wS_i, \wvphi_i)\to (S, \vphi_i)$ for $i=0,1$ represent the {\em same} branched covering of $(S, \phi)$ along $\cL=[L_0]=[L_1]$ if and only if $(\pi_0)_{L_0}$ and $(\pi_1)_{L_1}$ are equivalent. 
\end{definition}

By Proposition \ref{prop:well-defined}, the following definition is also well-defined.

\begin{definition}
Let $(\widetilde{S},\widetilde{\phi})$ be a branched covering of $(S, \phi)$ along a closed braid $\cL=[(S,\varphi),L)]$. We say that a branched covering $(\widetilde{S},\widetilde{\phi})$ \emph{represents} the contact branched covering $\pi:(\widetilde{M},\widetilde{\xi}) \rightarrow (M,\xi)$ along $\mathcal{T}$,
if the following holds:
\begin{itemize}
\item $\cL= ((S,\varphi),L)$ is a braid representative of $\mathcal{T}$. So we have a contactomorphism $\theta: (M,\xi) \rightarrow (M_{(S,\varphi)},\xi_{(S,\varphi)})$ such that $\theta(\mathcal{T})=L$.
\item There is a contactomorphism $\widetilde{\theta}: (\widetilde{M},\widetilde{\xi}) \rightarrow (M_{(\wS, \wvphi)},\xi_{(\wS,\wvphi)})$ such that 
$\widetilde{\theta}(\widetilde{\mathcal{T}})=\widetilde{L}$, where $\widetilde{\mathcal{T}} = \pi^{-1}(\mathcal{T})$
\item  The following diagram commutes away from the branch loci:
\[
\xymatrix{
(\widetilde{M},\widetilde{\xi}) \ar[r]^-{\widetilde{\theta}} \ar[d]_{\pi}& (M_{(\wS, \wvphi)},\xi_{(\wS,\wvphi)}) \ar[d]^{\pi}\\ 
(M,\xi) \ar[r]^-{\theta}& (M_{(S,\varphi)}, \xi_{(S,\vphi)})
} \]
\end{itemize}
\end{definition}


\section{Formula of FDTC under branched coverings}

To state the main result  (Theorem~\ref{theorem:covering}) we define fully ramified branched covers. 

\begin{definition}(cf. \cite{W}) 
\label{definition:fully-ramified}
We say that a branched covering $\pi: (\widetilde{S}, \widetilde P) \rightarrow (S, P)$ is \emph{fully ramified} if for every branch point $\widetilde p \in \widetilde P$ there exists a disk neighborhood $\widetilde N$ containing $\widetilde p$ such that the restriction $\pi|_{\widetilde{N}}: \widetilde{N} \rightarrow \pi(\widetilde{N})$ is a non-trivial branched covering with the single branch point.
\end{definition}

A cyclic branched covering is fully ramified. On the other hand, a simple branched covering, which often appears  in the study of 3-manifolds (eg. Hilden-Montesions' theorem \cite{Hi,Mo}), is not fully ramified unless it is a double branched cover.

The following theorem shows that the FDTC behaves nicely under a fully ramified branched covering map. 

\begin{theorem}
\label{theorem:covering}
Assume that a branched covering $\pi: (\widetilde S, \widetilde P)\to (S, P)$ is fully ramified and $\chi(\widetilde{S})<0$. 
Suppose that  
$(\wS, \wphi)$ is a branched covering of $(S, \phi)$ along $\cL$. 
For a boundary component $C$ of $S$ let $\widetilde{C}$ be a connected component of the preimage $\pi^{-1}(C)$. 
Let $d(\pi, \widetilde{C})$ denote the degree of the covering $\pi|_{\widetilde{C}}: \widetilde{C} \rightarrow C$.
Then we have 
\begin{equation}\label{eq:key}
c(\widetilde{\phi},\widetilde{C}) = c(\widetilde{\phi},\widetilde{\cL}, \widetilde{C}) =  c(\phi,\cL,C)/d(\pi,\widetilde{C}).
\end{equation}

\end{theorem}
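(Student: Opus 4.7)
The plan is to prove the two equalities separately: the second by a lift argument on universal covers, the first by essential use of the fully ramified hypothesis. For the second equality $c(\widetilde\phi,\widetilde\cL,\widetilde C) = c(\phi,\cL,C)/d(\pi,\widetilde C)$, since the branching of $\pi$ lies in $\widetilde P$, the restriction $\pi|_{\widetilde S \setminus \widetilde P} : \widetilde S \setminus \widetilde P \to S \setminus P$ is unbranched; hence the universal covers $\widetilde{\widetilde S \setminus \widetilde P}$ and $\widetilde{S \setminus P}$ are canonically identified as a single simply-connected surface $\hat S$. From $\pi \widetilde\varphi = \varphi_L \pi$, one lift $\hat\varphi : \hat S \to \hat S$ simultaneously represents canonical lifts of $\widetilde\varphi$ and $\varphi_L$. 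A chosen lift $\hat C \subset \hat S$ of $C$ is equally a lift of $\widetilde C$; on this real line $\hat\varphi$ acts by a translation $\tau$. Measured against the deck-group action of $\pi_1(C)$ one reads $c(\phi,\cL,C) = \tau$, while measured against the index-$d(\pi,\widetilde C)$ subgroup $\pi_1(\widetilde C) \subset \pi_1(C)$ one reads $c(\widetilde\phi,\widetilde\cL,\widetilde C) = \tau/d(\pi,\widetilde C)$, yielding the second equality.

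For the first equality $c(\widetilde\phi,\widetilde C) = c(\widetilde\phi,\widetilde\cL,\widetilde C)$, the same $\widetilde\varphi$ is measured in two different mapping class groups, related by the forgetful map $\MCG(\widetilde S,\widetilde P) \to \MCG(\widetilde S)$. In general this map can change the FDTC via Birman-kernel contributions (push moves of marked points around $\widetilde C$). The fully ramified hypothesis eliminates this potential discrepancy: each $\widetilde p \in \widetilde P$ is intrinsically characterized by its local branching index $e_{\widetilde p}>1$, giving $(S,P)$ a natural orbifold structure with cone orders $e_p$ such that $\pi$ becomes an orbifold covering with $\widetilde P$ regular points of the target orbifold $\widetilde S$. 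From this orbifold viewpoint, the marked-point structure on $(\widetilde S,\widetilde P)$ is redundant for a diffeomorphism respecting $\pi$, and the canonical lift of $\widetilde\varphi$ to $\widetilde{\widetilde S \setminus \widetilde P}$ descends to the canonical lift to $\widetilde{\widetilde S}$; the hypothesis $\chi(\widetilde S)<0$ guarantees that the boundary loop at $\widetilde C$ is not contained in the normal closure of loops around $\widetilde P$, so a lift of $\widetilde C$ in either universal cover records the same translation under the respective canonical lift.

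The delicate step is the compatibility of canonical lifts in the first equality. If fully ramified were dropped, unramified preimages of $P$ would not be intrinsically distinguished, and Birman-kernel push moves around $\widetilde C$ could alter the FDTC. I expect the argument to invoke Nielsen--Thurston canonical forms (justified by $\chi(\widetilde S)<0$) to reduce each FDTC to the rotational part of $\widetilde\varphi$ on a collar of $\widetilde C$, where the orbifold correspondence above forces the two computations to coincide.
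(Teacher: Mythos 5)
Your argument for the second equality is a genuinely different (and uniform) route from the paper's: the paper proves $c(\widetilde{\phi},\widetilde{\cL},\widetilde{C})=c(\phi,\cL,C)/d(\pi,\widetilde{C})$ by a Nielsen--Thurston case analysis --- counting how the singular leaves of the lifted invariant foliations twist near $\widetilde{C}$ in the pseudo-Anosov case, and comparing the power decompositions $[\varphi_L]^{ND}=(T_C)^{MD}T^D$ and $[\wvphi]^{ND}=(T_{\widetilde C})^{MD/d}T'$ in the periodic case --- whereas you rescale a translation number by the index of $\pi_1(\widetilde C)$ in $\pi_1(C)$ inside the common universal cover of $\wS\setminus\wP$ and $S\setminus P$. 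That idea is sound and handles all Nielsen--Thurston types at once, modulo one imprecision: the canonical lift $\hat\varphi$ fixes the chosen lift $\hat C$ \emph{pointwise} (since $\varphi_L|_C=\mathrm{id}$), so the FDTC is not read off from the action on $\hat C$ itself but from the induced action on the circle at infinity (or on the $\Z$-ordered set of lifted arcs emanating from $\hat C$); once that is set up, the index-$d$ rescaling does give the factor $1/d$.

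The genuine gap is in the first equality, which is exactly where the hypotheses do their work. Your orbifold heuristic never pins down the mechanism by which full ramification kills the discrepancy of the forgetful map $\MCG(\wS,\wP)\to\MCG(\wS)$. The paper's point is concrete: if $[\varphi_L]$ is pseudo-Anosov with invariant foliations $(\mF^u,\mF^s)$, a point $\widetilde p\in\wP$ over a $k$-pronged puncture $p$ becomes a $k\,e_{\widetilde p}$-pronged singularity of $\pi^{-1}(\mF^s)$, and full ramification gives $e_{\widetilde p}\ge 2$, hence at least two prongs at every point of $\wP$; therefore filling in the punctures of $\wP$ still yields an admissible pseudo-Anosov pair for $f[\wvphi]=\wphi$, and the FDTC at $\widetilde C$ is unchanged. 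Without this, a $1$-pronged singularity at some $\widetilde p$ would be destroyed by filling, and $f[\wvphi]$ could have a different Nielsen--Thurston representative and a different FDTC --- this is the ``Birman-kernel'' danger you name but do not actually rule out. Moreover, your sketch is silent on the periodic case of the first equality, where the needed input is not a statement about normal closures of loops around $\wP$ but the fact that $\chi(\wS)<0$ keeps the boundary Dehn twists $f(T_{\widetilde C_1})\neq f(T_{\widetilde C_2})$ distinct in $\MCG(\wS)$, so the exponent of $T_{\widetilde C}$ in $[\wvphi]^{ND}$ survives the forgetful map; Remark~\ref{rem:0} of the paper (the half-twist on $(D^2,\{p_1,p_2\})$ lifting to a Dehn twist on the annulus) shows this step genuinely fails when $\chi(\wS)=0$. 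As written, your proof of the first equality is an expectation rather than an argument.
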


The fully ramified condition is used when $\cL$ is pseudo-Anosov and the hyperbolic condition $\chi(\widetilde{S})<0$ is needed when $\cL$ is periodic.  
More specifically, for pseudo-Anosov case the proof below shows that only the following condition;  
\begin{equation}\label{eq:condition}
\mbox{at every point } \widetilde p \in \widetilde P \mbox{ the number of prongs  is greater than one;}  
\end{equation}
is enough to obtain the equation (\ref{eq:key}), and the fully ramified condition guarantees (\ref{eq:condition}).

\begin{proof} 
Suppose that a closed braid $((S, \vphi), L)$ represents $\cL$ and satisfies (\ref{eqn:assumptionweak}). 
Let $\wvphi\in\Diff(\wS,\wP, \partial \wS)$ represents $\wphi \in \MCG(\wS)$ so that $(\wS, \wvphi)$ is a branched covering of $(S, \vphi)$ along $L$.


Suppose that the distinguished monodromy $[\varphi_L]$ is of pseudo-Anosov.  
We think $P$ is the set of punctures of the surface $S\setminus P$ and $\vphi_L$ is a diffeomorphism of $S\setminus P$. 
Let $(\mathcal F^u, \mathcal F^s)$ be a pair of unstable and stable transverse measured foliations for $\varphi_L$ and $\lambda>1$ be its dilatation.
Every puncture point  becomes a singularity of $(\mathcal F^u, \mathcal F^s)$ with $k$ prongs for some $k \geq 1$ (when $k=2$ the singularity is just a regular point).
Collapsing each boundary component to a point, it yields a $k$-pronged singularity for some $k \geq 1$. Recall that a 1-pronged singularity is allowed only at a puncture or a boundary component. 

We see that $\wvphi \in\Diff(\wS, \widetilde P, \partial\wS)$ is a  pseudo-Anosov map with transverse measured foliations $(\pi^{-1}(\mathcal F^u), \pi^{-1}(\mathcal F^s))$ with the same dilatation $\lambda$.  

For pseudo-Anosov case, the FDTC $c([\varphi_L],C)$ counts how much $\vphi_L$ twists the singular leaves (prongs) of the stable  foliation $\mathcal{F}^s$ (or equivalently $\mathcal{F}^u$) near the boundary component $C$. 
Therefore, we obtain $c([\varphi_L],C)= c([\wvphi], \widetilde{C}) \cdot d(\pi,\widetilde{C})$. See Figure~\ref{fig:FDTC}. 
By Definition~\ref{definition:FDTC-closed-braid} we get the second equality of (\ref{eq:key}):
$$
c(\phi,\cL,C)= c([\varphi_{L}],C)=c([\wvphi],\widetilde{C})\cdot d(\pi, \widetilde{C})= c(\widetilde{\phi},\widetilde{\cL},\widetilde{C})\cdot d(\pi, \widetilde{C}).
$$

\begin{figure}[htbp]
\begin{center}
\includegraphics*[bb=176 523 475 713,width=90mm]{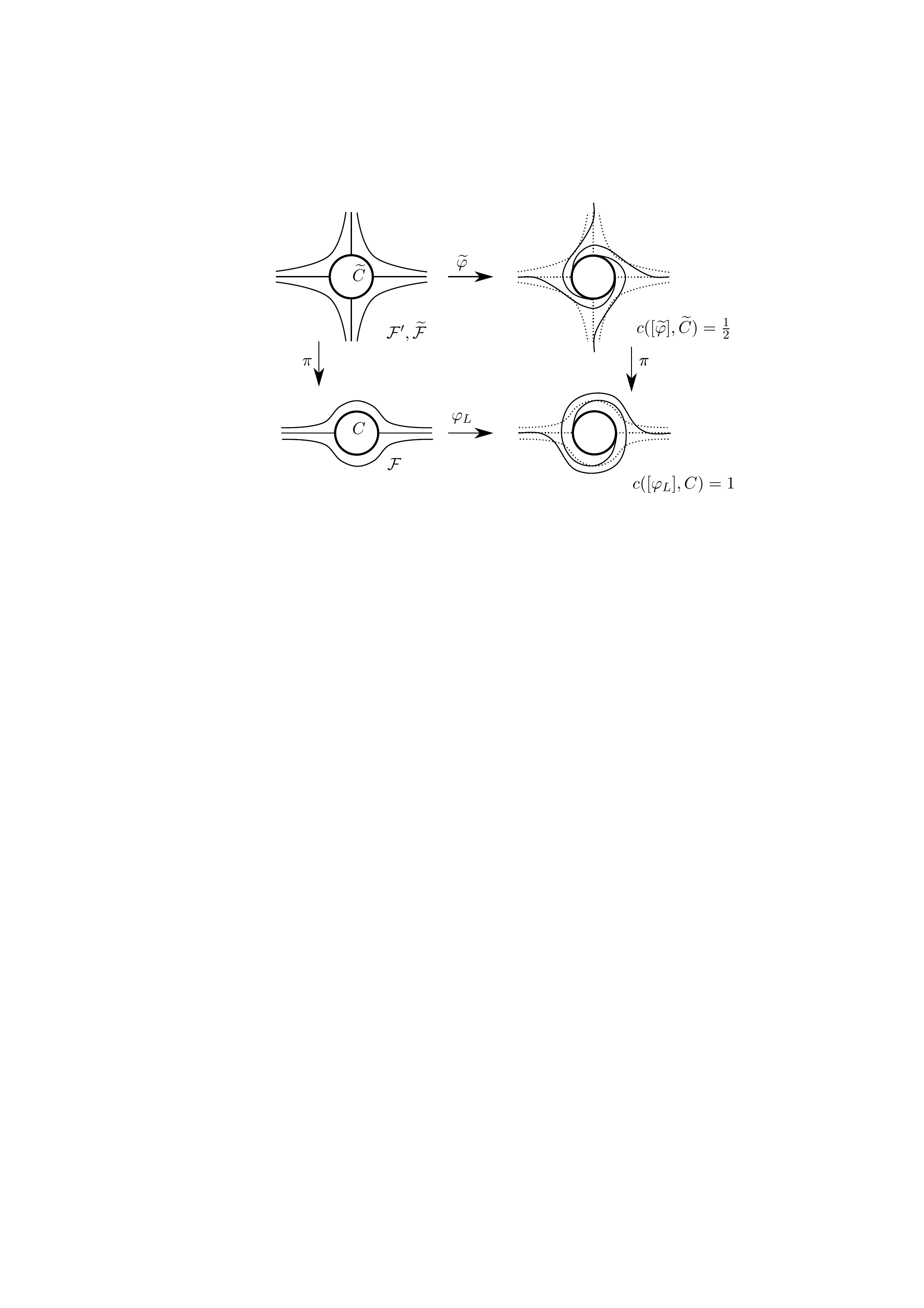}
\caption{Stable foliation and branched covering. The FDTC $c(-,C)$ counts twisting of singular leaves near the boundary component $C$. This figure illustrates an example where $d(\pi,\widetilde{C})=2$.}
\label{fig:FDTC}
\end{center}
\end{figure}

It remains to show the first equality of (\ref{eq:key}). 
Fix a puncture point $p\in P$.
Then $p$ is a $k$-pronged singularity of $\mathcal F^s$ for some $k\geq 1$.
Then each point $\widetilde{p} \in \pi^{-1}(p)$ is a $kd$-pronged singularity of $\pi^{-1}(\mathcal{F}^s)$ for some $d$. 
Since $d>1$ by the fully ramified assumption on $\pi$, each point in $\widetilde P$ is a singularity of $\pi^{-1}(\mathcal F^s)$ with more than one prong. 
Therefore, filling all the puncture points of $\widetilde{P}$,
the pair $(\pi^{-1}(\mathcal F^u), \pi^{-1}(\mathcal F^s))$ 
can be regarded as a pair of unstable and stable transverse measured foliations on $\wS$ for $f[\wvphi]=\wphi \in \MCG(\wS)$. 
In particular, $\wphi$ is pseudo-Anosov.  
We obtain $c(\widetilde{\phi},\widetilde{C}) = c(f[\wvphi], \widetilde C)= c(\widetilde{\phi},\widetilde{\cL}, \widetilde{C})$.

Next, suppose that $[\varphi_L]$ is periodic. 
There exists $N\in\mathbb Z$ such that $\varphi_L^N$ is freely isotopic to $id_{(S, P)}$. 
This means that there exist $M\in\mathbb Z$ and 
$T\in\MCG(S, P)$ which is a product of Dehn twists about boundary components of $\partial S\setminus C$ such that
\begin{equation}\label{eq:downstair}
[\varphi_L]^N = (T_C)^M  T \in \MCG(S, P).
\end{equation}

Suppose that the covering $\pi:(\widetilde S, \widetilde P)\to(S, P)$ is $\delta$-fold.
Let $D=\delta!$. Then for every boundary component $X$ of $\widetilde{S}$, $d(\pi,X)$ divides $D$. In particular, $d:=d(\pi, \widetilde C)$ divides $D$. 
Taking the $D$th power we get 
\[ 
[\varphi_L]^{ND} = (T_C)^{MD} \ T^D \in \MCG(S, P).
\]
In the upstair of the covering, this yields
\begin{equation}\label{eq:upstair-puncture}
[\wvphi]^{ND} = (T_{\widetilde C})^{M D/d} \ T'\in \MCG(\widetilde S, \widetilde P)
\end{equation}
for some product $T'$ of Dehn twists about boundary components of $\partial S\setminus \widetilde C$. 

Since $\chi(\widetilde S)<0$ the space $\widetilde S$ is not an annulus; thus, for any two distinct boundary components $\widetilde C_1, \widetilde C_2$ of $\widetilde S$,
\[ f(T_{\widetilde C_1}) \neq f(T_{\widetilde C_2}) \in \MCG(\widetilde{S})\]
where $f:\MCG(\widetilde{S},\widetilde{P}) \rightarrow \MCG(\widetilde{S})$ denotes the forgetful map.  (cf. Remark~\ref{rem:0} below.)
Abusing the notation, we may denote $f(T_{\widetilde C})$ by $T_{\widetilde C}$ since both are Dehn twists along $\widetilde{C}$. 
With this observation, filling the puncture points of $\widetilde P$ we obtain 
\begin{equation}\label{eq:upstair}
\wphi^{ND}=(f[\widetilde\varphi])^{ND} = (T_{\widetilde C})^{M D/d} \circ T' \ \ \ \mbox{ in } \MCG(\widetilde S).
\end{equation}

As for the FDTC, equation (\ref{eq:downstair}), (\ref{eq:upstair-puncture}) and 
(\ref{eq:upstair}) give
\[ 
c(\phi, \cL, C)=c([\varphi_L], C)=\frac{M}{N}, \
c(\widetilde\phi, \widetilde\cL, \widetilde C)= c([\wvphi_{\widetilde{L}}],\widetilde C) = \frac{M}{dN}, \
c(\widetilde\phi, \widetilde C)= c(f[\widetilde{\varphi}],\widetilde C)=\frac{M}{dN},
\]
respectively. 

Finally if $[\varphi_L]$ is reducible then the statement follows from the pseudo-Anosov and periodic cases.
\end{proof}

\begin{remark}\label{rem:0}
If $\chi(\widetilde{S})=0$ (i.e. $\widetilde{S}$ is an annulus)
then Theorem~\ref{theorem:covering} does not hold. Consider a double branched covering $\pi: A \rightarrow D^{2}$ branched at two points $P=\{p_1,p_2\} \subset D^2$ and the positive half-twist $\sigma \in \MCG(D^{2}, P)\simeq B_2$. 
The mapping class $\sigma$ lifts to the positive Dehn twist $\tau \in \MCG(A)$ along the core of the annulus $A$. 
We have $c(\tau,\widetilde{C})=1$, $d(\pi, \widetilde{C}) = 1$ and $c(\sigma,C) = \frac{1}{2}.$
\end{remark}

As a corollary, we obtain that right-veering property is preserved under fully ramified branched covers.

\begin{corollary}\label{cor:covering-right-veering}
If $\pi: (\widetilde S, \widetilde P)\to (S, P)$ is fully ramified and $(\wS, \wphi)$ is a branched covering of $(S, \phi)$ along $\cL$, then the following are equivalent:
\begin{enumerate}
\item $\cL$ is right-veering $($with respect to $C).$ 
\item $\widetilde{\phi}$ is right-veering $($with respect to $\widetilde C).$
\item The lift $\widetilde{\cL}$ is right-veering $($with respect to $\widetilde C).$
\end{enumerate}
\end{corollary}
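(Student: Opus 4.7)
The plan is to invoke Theorem~\ref{theorem:covering} to align the three FDTCs, then handle the one delicate case by mirroring the Nielsen-Thurston analysis used in the proof of that theorem. By Theorem~\ref{theorem:covering},
\[
c(\phi,\cL,C) \;=\; d(\pi,\widetilde C)\cdot c(\widetilde\phi,\widetilde\cL,\widetilde C) \;=\; d(\pi,\widetilde C)\cdot c(\widetilde\phi,\widetilde C),
\]
so the three FDTCs differ only by the positive integer factor $d(\pi,\widetilde C)$ and share the same sign. A strictly positive FDTC at a boundary implies right-veering there, and a strictly negative one rules it out (standard HKM theory); hence whenever $c(\phi,\cL,C)\ne 0$ the equivalence of (1), (2), (3) is immediate.

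The remaining case is $c(\phi,\cL,C)=0$, which I handle by case analysis on the common Nielsen-Thurston type of $\vphi_L$, $\wvphi$, and $\wphi=f[\wvphi]$; sameness of type is assured by the proof of Theorem~\ref{theorem:covering}, with the fully ramified assumption (via condition (\ref{eq:condition})) crucial for preserving pseudo-Anosovness after the $\pi$-pullback and subsequent filling of $\wP$. With vanishing FDTC, right-veering at a given boundary is governed by finer local data: in the pseudo-Anosov case, the side (clockwise versus counterclockwise) from which the first unstable prong emanates; in the periodic case, the first-return action, on arc germs emanating from the boundary, of the appropriate root of the identity-in-a-collar power coming from (\ref{eq:downstair})--(\ref{eq:upstair}). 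In each subcase this data is carried faithfully from $C$ to $\widetilde C$ by the orientation-preserving local diffeomorphism $\pi$, and is unaffected by filling in $\wP$ to pass from $\wvphi$ to $\wphi$. The reducible subcase then follows by applying these analyses to the Nielsen-Thurston pieces adjacent to $C$ and $\widetilde C$.

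As a sanity check on the easier implication, (3)$\Rightarrow$(1) also admits a direct arc-lifting proof: any essential arc $\alpha$ in $S\setminus P$ starting at $x_0\in C$ lifts uniquely to an embedded arc $\widetilde\alpha$ in $\wS\setminus\wP$ starting at a preimage $\widetilde x_0\in\widetilde C$. Since $\wvphi$ fixes $\partial\wS$ pointwise, $\pi\circ\wvphi=\vphi_L\circ\pi$ gives $\vphi_L(\alpha)=\pi(\wvphi(\widetilde\alpha))$. As $\pi$ is an orientation-preserving local diffeomorphism near $\widetilde x_0$ and sends isotopies rel endpoints in $\wS\setminus\wP$ to isotopies rel endpoints in $S\setminus P$, the relation $\wvphi(\widetilde\alpha)\geq_{\widetilde x_0}\widetilde\alpha$ pushes down to $\vphi_L(\alpha)\geq_{x_0}\alpha$. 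The reverse implication (1)$\Rightarrow$(3), which cannot use naive projection (a downstairs embedded arc need not have every upstairs embedded arc as a lift), is where the FDTC/Nielsen-Thurston reduction does the essential work.

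The main obstacle will be the FDTC$=0$ pseudo-Anosov situation: the \emph{first prong direction} must be defined precisely so that its invariance under both the $\pi$-pullback of the stable foliation and the filling of $\wP$ can be verified. The fully ramified assumption is precisely what prevents a valence-one lift from appearing at a point of $\wP$; without it $\wphi$ could fail to be pseudo-Anosov and the parallel structure among (1), (2), (3) would break down exactly where the elementary FDTC-sign argument has nothing to say.
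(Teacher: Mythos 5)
Your overall strategy---reduce right-veering to the sign of the FDTC via the Honda--Kazez--Mati\'c characterization, match the signs using Theorem~\ref{theorem:covering}, and fall back on the (preserved) Nielsen--Thurston type when the FDTC vanishes---is exactly the paper's argument, and for $\chi(\wS)<0$ it works. One simplification: in the $c=0$ pseudo-Anosov case you do not need any ``first prong direction'' invariant, because the HKM characterization is already complete there: a pseudo-Anosov map with vanishing FDTC at a boundary component is automatically neither right- nor left-veering (pseudo-Anosov is right-veering iff $c>0$; periodic iff $c\geq 0$; reducible reduces to the piece adjacent to the boundary). So once the types of $\vphi_L$, $\wvphi$ and $\wphi$ agree---which the proof of Theorem~\ref{theorem:covering} establishes, using full ramification exactly as you say via condition~(\ref{eq:condition})---the $c=0$ case is immediate, and chasing a finer local invariant is not only unnecessary but slightly misleading, since no such invariant decides right-veering-ness for a $c=0$ pseudo-Anosov map.

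The genuine gap is the exceptional case $\chi(\wS)\geq 0$. The corollary carries no hypothesis on $\chi(\wS)$, whereas Theorem~\ref{theorem:covering} requires $\chi(\wS)<0$, and its formula is actually false when $\wS$ is an annulus: in Remark~\ref{rem:0} the positive half-twist $\sigma$ has $c(\sigma,C)=\tfrac12$ while its lift $\tau$ has $c(\tau,\widetilde C)=1$ with $d(\pi,\widetilde C)=1$. Hence your first displayed equation, on which the whole argument rests, does not hold there. You must treat the annulus (and the vacuous disk case) separately; the paper does this by pointing to Remark~\ref{rem:0}, and the equivalence survives because the FDTCs upstairs and downstairs still have the same sign even though the multiplicative relation changes. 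Your closing arc-lifting sketch of $(3)\Rightarrow(1)$ is a reasonable sanity check but, as you note, does not replace the FDTC argument for the other direction.
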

\begin{proof}
This follows from a characterization of right-veering in terms of the FDTC. See \cite[Section 3]{HKM}. The exceptional case $\chi(\widetilde{S})=0$ where we cannot apply Theorem~\ref{theorem:covering} is treated by Remark~\ref{rem:0}.
\end{proof}

\section{Applications}
\label{section:applications}

The FDTC has various applications to 3-manifolds and contact 3-manifolds. In this section we use Theorem \ref{theorem:covering} to study properties of fully ramified branched coverings.

\subsection{Applications to topology}

For a closed braid $\cL=[((S,\varphi),L)]$ in the open book $(S,\phi)$, the topological type of $M_{(S,\varphi)} \setminus L$ is independent of a choice of representative $((S,\varphi),L)$ of $\cL$. 
With this in mind,  we say that $\cL$ is {\em Seifert-fibered} (resp. {\em toroidal, hyperbolic}) if  $M_{(S,\varphi)} \setminus L$ is Seifert-fibered (resp. toroidal, hyperbolic).

The next corollary shows that the geometric structure is preserved under taking a branched covering when the FDTC is large compared with the degree of covering.

\begin{corollary}
\label{cor:overtwisted}
Let $(\widetilde{S},\widetilde{\phi})$ be a fully ramified $d$-fold branched covering of $(S,\phi)$ branched along a closed braid $\cL$ with $\chi(\widetilde S) < 0$. 
Assume that 
\begin{enumerate}
\item[(a)] both $\partial S$ and $\partial \widetilde{S}$ are connected and $|c(\phi,\cL,\partial S)| > d,$ or
\item[(b)] $|c(\phi,\cL,C)| > 4 d(\pi,\widetilde{C})$ for every boundary component $C \subset \partial S$ and  connected component $\widetilde{C} \subset \pi^{-1}(C)$.
\end{enumerate}
If $\cL$ is Seifert-fibered (resp. toroidal, hyperbolic) then  $M_{(\widetilde{S},\widetilde{\phi})}$ is Seifert-fibered (resp. toroidal, hyperbolic).
\end{corollary}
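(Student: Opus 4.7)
The proof amounts to chaining two applications of the criterion from \cite{ik2} cited in the introduction---namely, that a sufficient FDTC bound forces the geometric decomposition of an open-book $3$-manifold to be dictated by the Nielsen-Thurston type of its monodromy---with Theorem \ref{theorem:covering}.

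First I would translate the two FDTC hypotheses into usable bounds upstairs. By Theorem \ref{theorem:covering}, $c(\widetilde \phi, \widetilde C) = c(\phi, \cL, C)/d(\pi, \widetilde C)$. Under hypothesis (a) the connectedness of $\partial \widetilde S$ forces $d(\pi, \widetilde C) = d$, so $|c(\widetilde \phi, \widetilde C)| > 1$; under hypothesis (b) the pointwise bound yields $|c(\widetilde \phi, \widetilde C)| > 4$ for each $\widetilde C$. Either bound puts us in the regime where the \cite{ik2}-type criterion applies to $(\widetilde S, \widetilde \phi)$.

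Next I would observe that the Nielsen-Thurston type is preserved when passing between the distinguished monodromy $\varphi_L$ of $\cL$ and $\widetilde \phi$. This is already implicit in the proof of Theorem \ref{theorem:covering}: pseudo-Anosov measured foliations pull back under $\pi$, and the fully ramified hypothesis guarantees that no point of $\widetilde P$ becomes a $1$-pronged singularity, so after filling $\widetilde P$ the resulting foliations still witness that $\widetilde \phi$ is pseudo-Anosov. The periodic computation in that same proof shows that if $\varphi_L$ is periodic then so is $\widetilde \phi$ (this step uses $\chi(\widetilde S) < 0$), and the reducible case splits along the invariant multicurves into pieces of the previous two types.

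The argument concludes by applying \cite{ik2} at both ends of the covering. Downstairs, the criterion applied to the distinguished monodromy $\varphi_L$ on the punctured surface identifies Seifert-fibered/toroidal/hyperbolic types of $M_{(S,\varphi)} \setminus L$ with periodic/reducible/pseudo-Anosov types of $\varphi_L$; upstairs, the same criterion applied to $\widetilde \phi$ identifies the three geometric types of $M_{(\wS, \wphi)}$ with the Nielsen-Thurston types of $\widetilde \phi$. The main obstacle is matching the right form of the \cite{ik2} criterion to each case: in (a) the connected-boundary version with FDTC exceeding $1$ suffices, while in (b) one needs the multi-boundary strengthening requiring FDTC greater than $4$ on every component, both downstairs (for the link complement) and upstairs (for the closed manifold). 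Provided both versions are in hand, the three-step chain \emph{geometric type of $\cL \Rightarrow$ Nielsen-Thurston type of $\varphi_L \Rightarrow$ Nielsen-Thurston type of $\widetilde \phi \Rightarrow$ geometric type of $M_{(\wS, \wphi)}$} closes.
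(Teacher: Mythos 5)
Your proposal is correct and follows essentially the same route as the paper: the paper likewise chains the FDTC criterion of Ito--Kawamuro (Theorem 8.4 applied downstairs to the link complement via the distinguished monodromy $[\varphi_L]$, and Theorem 8.3 applied upstairs to $M_{(\wS,\wphi)}$) with Theorem \ref{theorem:covering} and the preservation of Nielsen--Thurston type established in its proof. Your observation that connectedness of $\partial\wS$ forces $d(\pi,\partial\wS)=d$ in case (a) matches the paper's computation $|c(\widetilde{\phi},\partial \widetilde{S})| = |c(\phi,\cL, \partial S)/d| > 1$.
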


\begin{proof}
We prove the theorem under the assumption (a). Under the assumption (b) a parallel argument holds. 

Since 
$|c(\phi,\cL,\partial S)| >  d \geq 1$, 
\cite[Theorem 8.4]{ik2} implies that $M_{(S,\vphi)}\setminus L$ is Seifert-fibered (resp. toroidal, hyperbolic) if and only if the distinguished monodromy $[\varphi_{L}]$ is periodic (resp. reducible, pseudo-Anosov).

The proof of Theorem \ref{theorem:covering} shows that if a  branch covering is fully ramified and $\chi(\widetilde S)<0$ then $\widetilde{\phi}$ is periodic (resp. reducible, pseudo-Anosov) if and only if $[\varphi_{L}]$ is periodic (resp. reducible, pseudo-Anosov).

By Theorem \ref{theorem:covering}, 
$|c(\widetilde{\phi},\partial \widetilde{S})| = |c(\phi,\cL, \partial S)\slash d(\pi, \partial\widetilde{S})| = |c(\phi,\cL, \partial S) \slash d | >1$. With \cite[Theorem 8.3]{ik2} we conclude that $M_{(\widetilde{S},\widetilde{\phi})}$ is Seifert-fibered (resp. toroidal, hyperbolic).
\end{proof}

Here is a slightly different application: non-existence of closed braid representative with large FDTC.

\begin{corollary}
Let $\mathcal{K}$ be a hyperbolic link in $S^{3}$ whose $d$-fold $(d\geq 2)$ cyclic branched covering is an $L$-space. Then we have $|c([id], \cL,\partial D^{2})| < d$ for every closed $k$-braid representative $\cL$ with respect to the disk open book $(D^{2}, [id])$ such that $(k,d)=1$.
\end{corollary}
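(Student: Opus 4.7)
The plan is to argue by contradiction: suppose $\cL$ is a closed $k$-braid representative of $\mathcal{K}$ with $(k,d)=1$ and $|c([id],\cL,\partial D^2)|\geq d$, and derive a contradiction with the hypothesis that the $d$-fold cyclic branched cover $\Sigma_d(\mathcal{K})$ is an $L$-space.

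First I would construct the $d$-fold cyclic branched covering $\pi:(\widetilde S,\widetilde\phi)\to(D^2,[id])$ along $\cL$. Cyclic covers are fully ramified (every meridian maps to a generator of $\Z/d$, so each branch point has a single preimage where $\pi$ is locally a nontrivial cyclic branched cover). Because $(k,d)=1$, the class of $\partial D^2$, equal to the sum of the $k$ meridians, maps to $k\bmod d$, a generator of $\Z/d$; hence $\widetilde S$ is connected, $\pi^{-1}(\partial D^2)$ is a single boundary circle $\widetilde C$, and $d(\pi,\widetilde C)=d$. The Riemann--Hurwitz formula gives $\chi(\widetilde S)=d-(d-1)k$, and $k\geq 2$ (forced by hyperbolicity of $\mathcal{K}$) together with $d\geq 2$ and $(k,d)=1$ yields $\chi(\widetilde S)<0$ in every remaining case. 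By construction the induced branched covering of 3-manifolds $\pi: M_{(\widetilde S,\widetilde\phi)}\to S^3$ is branched along $\mathcal{K}$, so $M_{(\widetilde S,\widetilde\phi)}=\Sigma_d(\mathcal{K})$.

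Next I would apply Theorem~\ref{theorem:covering}. Since $|c([id],\cL,\partial D^2)|\geq d\geq 2 >1$ and $S^3\setminus \mathcal{K}$ is hyperbolic, \cite[Theorem~8.4]{ik2} forces the distinguished monodromy $[id_L]$ to be pseudo-Anosov. The pseudo-Anosov case of the proof of Theorem~\ref{theorem:covering} then shows that $\widetilde\phi$ is pseudo-Anosov, and the theorem itself gives
\[
|c(\widetilde\phi,\widetilde C)|\;=\;\frac{|c([id],\cL,\partial D^2)|}{d(\pi,\widetilde C)}\;=\;\frac{|c([id],\cL,\partial D^2)|}{d}\;\geq\;1.
\]

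Finally, the plan is to produce a co-oriented taut foliation on $\Sigma_d(\mathcal{K})$, which contradicts the $L$-space hypothesis by Ozsv\'ath--Szab\'o's theorem that $L$-spaces admit no co-oriented taut foliation. If $c(\widetilde\phi,\widetilde C)\geq 1$, \cite{HKM2} directly produces such a foliation on $M_{(\widetilde S,\widetilde\phi)}=\Sigma_d(\mathcal{K})$. If $c(\widetilde\phi,\widetilde C)\leq -1$, I would replace $\widetilde\phi$ by $\widetilde\phi^{-1}$: this is still pseudo-Anosov with $c(\widetilde\phi^{-1})=-c(\widetilde\phi)\geq 1$, and $M_{(\widetilde S,\widetilde\phi^{-1})}=-\Sigma_d(\mathcal{K})$ is again an $L$-space by invariance under orientation reversal, so the same argument applies. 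The main technical point I expect is the bookkeeping that verifies the hypotheses of Theorem~\ref{theorem:covering} under $(k,d)=1$ (full ramification, connected boundary, $d(\pi,\widetilde C)=d$, and $\chi(\widetilde S)<0$); once these are in place the argument is a clean chain Theorem~\ref{theorem:covering} $\Rightarrow$ \cite{HKM2} $\Rightarrow$ Ozsv\'ath--Szab\'o.
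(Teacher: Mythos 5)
Your proposal is correct and follows essentially the same route as the paper: pass to the standard $d$-fold cyclic branched cover (fully ramified, with $\partial\widetilde S$ connected and $\chi(\widetilde S)<0$ thanks to $(k,d)=1$), use hyperbolicity plus \cite[Theorem 8.4]{ik2} to get pseudo-Anosov, apply Theorem~\ref{theorem:covering} to get $|c(\widetilde\phi,\partial\widetilde S)|\geq 1$, and contradict the $L$-space hypothesis via \cite{HKM2} and \cite{OS}. Your explicit Riemann--Hurwitz check (where the paper cites \cite{HKP}) and your handling of the sign by passing to $\widetilde\phi^{-1}$ are just finer bookkeeping of the same argument.
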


\begin{proof}
Assume to the contrary that $\mathcal{K}$ is represented by a closed braid $\cL=[(D^{2},id),L)]$ with $|c([id],\cL,\partial D^{2})| \geq d \geq 2$.
Consider the $d$-fold cyclic branched covering $(\wS, \wphi)$ of $(D^2, [id])$ along $\cL$. The explicit construction of $(\wS, \wphi)$ is given in \cite{HKP}, which guarantees that $\chi(\wS)<0$ when $(k,d)=1$.


Since $(d,k)=1$, $\partial \widetilde{S}$ is connected. 
Hence, by Theorem \ref{theorem:covering} $|c(\widetilde{\phi},\partial \widetilde{S})|=\frac{1}{d}|c([id],\cL,\partial S)|\geq 1$. 
Since $\mathcal{K}$ is hyperbolic, \cite[Theorem 8.4]{ik2} implies that the distinguished monodromy $[id_L]$ is pseudo-Anosov (i.e., $L$ is a pseudo-Anosov braid). 
Then the proof of Theorem \ref{theorem:covering} implies $\widetilde{\phi}$ is also pseudo-Anosov and by \cite[Theorem 4.3]{HKM2} the 3-manifold $\widetilde{M}=M_{(\wS,\widetilde{\phi})}$ admits a taut foliation. This contradicts the assumption that $\widetilde{M}$ is an L-space since an L-space does not admit a taut foliation \cite[Theorem 1.4]{OS}.
\end{proof}

\subsection{Non-right-veering braids and virtually loose transverse links}
\label{section:right-veering-vs-loose}

In this section we study applications of Theorem \ref{theorem:covering} to contact topology.  

We first observe that for a pseudo-Anosov braid with large FDTC its branched covering is universally tight.

\begin{theorem}
Let $\pi:(\wS,\wP) \rightarrow (S,P)$ be a fully ramified covering, and $(\wS,\widetilde{\phi})$ be a branched covering of $(S,\phi)$ along a pseudo-Anosov braid $\cL$. For each boundary component $C_i$ of $S$ suppose that  $c(\phi,\cL,C_i)=k_i\slash p_i$ where $p_i$ denotes the number of prongs at $C_i$.
If $k_i\geq 2$ for every boundary component $C_i$, then $(\wS,\widetilde{\phi})$ supports a universally tight contact structure.
\end{theorem}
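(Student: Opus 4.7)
My plan is to apply the Colin-Honda theorem \cite{CH}, recalled in the introduction, directly to the upstairs open book $(\wS, \wphi)$. The two hypotheses I must verify are (i) that $\wphi$ is pseudo-Anosov, and (ii) that for every boundary component $\widetilde C$ of $\wS$, the FDTC can be written as $\widetilde k/\widetilde p$ with $\widetilde k \geq 2$ and $\widetilde p$ the number of prongs at $\widetilde C$.

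Hypothesis (i) is essentially free from earlier work. In the pseudo-Anosov case of the proof of Theorem~\ref{theorem:covering}, the fully ramified hypothesis is used precisely to guarantee that, after filling the points of $\widetilde P$, the lifted pair $(\pi^{-1}(\mathcal F^u), \pi^{-1}(\mathcal F^s))$ becomes a genuine pair of stable and unstable measured foliations on $\wS$ invariant under $\wphi$, with the same dilatation as $\varphi_L$. In particular $\wphi$ is pseudo-Anosov.

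For hypothesis (ii) I first count prongs on $\partial \wS$. Because $\pi$ is unramified on $\wS \setminus \widetilde P$, and in particular on $\partial \wS$, each boundary component $\widetilde C \subset \pi^{-1}(C_i)$ maps to the circle $C_i$ as an honest unramified cover of degree $d := d(\pi, \widetilde C)$. The $p_i$ boundary prongs at $C_i$ are attached at $p_i$ points of $C_i$; each such point has exactly $d$ preimages on $\widetilde C$, from each of which a lifted prong emanates. Therefore $\widetilde C$ carries $\widetilde p = d\, p_i$ prongs of $\pi^{-1}(\mathcal F^s)$. Combining this count with Theorem~\ref{theorem:covering} gives
$$
c(\wphi, \widetilde C) = \frac{c(\phi, \cL, C_i)}{d} = \frac{k_i/p_i}{d} = \frac{k_i}{\widetilde p}.
$$
By hypothesis $k_i \geq 2$, so the Colin-Honda criterion is satisfied at every boundary component of $\wS$. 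Applying \cite{CH} to $(\wS, \wphi)$ then yields that $\xi_{(\wS,\wphi)}$ is universally tight.

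The main obstacle, modest as it is, lies in the prong bookkeeping at the boundary: one must confirm that an unramified circle cover of degree $d$ simultaneously multiplies the number of prongs and the denominator $p_i$ of the FDTC by $d$, so that the numerator $k_i$, which controls the \cite{CH} hypothesis, is preserved under branched covering. This is a purely local statement near $\partial \wS$ and is well-behaved precisely because the branch locus $\widetilde P$ is interior; the nontrivial ramification issues at $\widetilde P$ were already settled in the proof of Theorem~\ref{theorem:covering}.
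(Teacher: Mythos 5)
Your proposal is correct and follows essentially the same route as the paper: verify $\wphi$ is pseudo-Anosov via the lifted foliations, count that $\widetilde C$ carries $p_i\, d(\pi,\widetilde C)$ prongs, and combine with Theorem~\ref{theorem:covering} to see the numerator $k_i\geq 2$ is preserved. The only difference is that the paper additionally invokes \cite[Theorem 4.5]{BE} alongside \cite[Corollary 2.7]{CH}, since the Colin--Honda statement as quoted in the introduction assumes connected boundary while $\wS$ may have several boundary components.
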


\begin{proof}
For each boundary component $\widetilde C_{i_j}$ of $\pi^{-1}(C_i)$ let $d_{i_j}:=d(\pi, \widetilde C_{i_j})$.   
By Theorem \ref{theorem:covering} we have $c(\widetilde{\phi},\widetilde C_{i_j}) = k_i \slash  p_i d_{i_j}$ 
and the number of the prongs at $\widetilde C_{i_j}$ is exactly $p_i d_{i_j}$. 
Since $k_i \geq 2$ for every $i$, it follows from \cite[Corollary 2.7]{CH} and \cite[Theorem 4.5]{BE} that $(\wS,\widetilde{\phi})$ supports a universally tight contact structure.
\end{proof}

Recall that a transverse link in an overtwisted contact 3-manifold is \emph{loose} if its complement is overtwisted and is \emph{non-loose} otherwise.

\begin{theorem}
\label{theorem:virtually-loose}
Let $\cL$ be a closed braid in an open book $(S,\phi)$.
Suppose that $\cL$ is non-right-veering with respect to some  boundary component $C$ of $S$. 
Let $(\widetilde{S},\widetilde{\phi})$ be a fully ramified branched covering of the open book $(S, \phi)$ along $\cL$. Then $(\widetilde{S},\widetilde{\phi})$ supports an overtwisted contact structure and  the lift $\widetilde{\cL}$ represents a loose transverse link. 
\end{theorem}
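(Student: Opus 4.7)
The plan is to combine Corollary~\ref{cor:covering-right-veering} with the Honda--Kazez--Mati\'c criterion that a non-right-veering open book supports an overtwisted contact structure~\cite{HKM}, and then to localize the resulting overtwisted disk near the boundary component $\widetilde C$ of $\widetilde S$ lying over $C$.

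First, since $\cL$ is non-right-veering with respect to $C$, Corollary~\ref{cor:covering-right-veering} immediately gives that $\widetilde\phi\in\MCG(\widetilde S)$ is non-right-veering with respect to $\widetilde C$ and that the distinguished monodromy $\widetilde\phi_{\widetilde L}\in\MCG(\widetilde S,\widetilde P)$ of the lift $\widetilde\cL$ is also non-right-veering with respect to $\widetilde C$. Applying the HKM theorem directly to $(\widetilde S,\widetilde\phi)$ then shows that the supported contact structure $\xi_{(\widetilde S,\widetilde\phi)}$ is overtwisted, which settles the first half of the conclusion.

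For the looseness of $\widetilde\cL$, I would re-present the same contact manifold by the braided open book with enlarged binding $\partial\widetilde S\cup\widetilde L$, page $\widetilde S\setminus \nu(\widetilde P)$, and monodromy $\widetilde\phi_{\widetilde L}$; by (\ref{eqn:distinguished}) and uniqueness of compatible contact structures it supports the same $\xi_{(\widetilde S,\widetilde\phi)}$. Because $\widetilde\phi_{\widetilde L}$ is non-right-veering at $\widetilde C$, a witnessing arc $\alpha$ may be chosen with both endpoints on $\widetilde C$ and supported in an arbitrarily thin collar of $\widetilde C$: the right-veering criterion at $\widetilde C$ depends only on the germ of the mapping class near $\widetilde C$, and $\widetilde C$ is disjoint from the punctures $\widetilde P$. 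The HKM overtwisted disk built from $\alpha$ and $\widetilde\phi_{\widetilde L}(\alpha)$ then sits inside a neighborhood of $\widetilde C\times S^1$ in the mapping torus, and in particular is disjoint from the binding component $\widetilde L$, which lives over $\widetilde P$. Consequently the complement $M_{(\widetilde S,\widetilde\phi)}\setminus\widetilde L$ is overtwisted, so $\widetilde\cL$ represents a loose transverse link.

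The main obstacle is the last step, the localization of the overtwisted disk: one must verify that the HKM construction can be made intrinsic to a collar of $\widetilde C$ and hence avoids $\widetilde L$. This is expected from the local nature of the right-veering criterion, but it requires being careful with the punctures lying in the interior of the page, and with the identification of the two open book presentations---one with binding $\partial\widetilde S$ and monodromy $\widetilde\phi$, and one with binding $\partial\widetilde S\cup\widetilde L$ and monodromy $\widetilde\phi_{\widetilde L}$---as defining the same contact $3$-manifold.
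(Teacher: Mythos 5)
Your first half is exactly the paper's argument: Corollary~\ref{cor:covering-right-veering} plus \cite[Theorem 1.1]{HKM} gives overtwistedness of $\xi_{(\wS,\wphi)}$. The second half, however, has a genuine gap, in two places.

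First, the proposed localization is impossible. An arc witnessing non-right-veering must be \emph{essential}, so it cannot be supported in a thin collar of $\widetilde C$ (a properly embedded arc in a collar is boundary-parallel); moreover the monodromy is the identity on a collar of the boundary, so nothing can be detected there. What is local at $\widetilde C$ is only the comparison of the germs of $\alpha$ and $\wvphi(\alpha)$ at their common endpoint; the arcs themselves, and the isotopies realizing minimal geometric intersection, are global on the page. Hence there is no reason the resulting overtwisted disk avoids $\widetilde L$.

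Second, and more fundamentally, even if you establish that the distinguished monodromy $[\wvphi_{\widetilde L}]\in\MCG(\wS,\wP)$ is non-right-veering, that does not imply $\widetilde{\cL}$ is loose: the paper stresses that non-right-veering closed braids need not represent loose transverse links, which is precisely why the notion of quasi-right-veering was introduced in \cite{ik-qveer}. A single left-veering arc does not produce an overtwisted disk in the complement of the enlarged binding $\partial\wS\cup\widetilde L$. The paper's proof instead takes the full HKM sequence of arcs $\wvphi(\alpha)=\alpha_1\prec_{\sf right}\cdots\prec_{\sf right}\alpha_n=\alpha$ on the \emph{unpunctured} surface $\wS$, perturbs them off $\wP$ while checking that consecutive arcs still realize minimal intersection in $\wS\setminus\wP$, concludes that $[\wvphi_{\widetilde L}]$ is not \emph{quasi}-right-veering, and then invokes \cite[Theorem 4.1]{ik-qveer} (non-loose iff every braid representative is quasi-right-veering) to get looseness. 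To repair your argument you would need to replace the localization step with this quasi-right-veering criterion or an equivalent substitute.
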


\begin{proof}

Let $((S,\varphi),L)$ be a representative of the closed braid $\cL$ satisfying (\ref{eqn:assumption}), and let $\pi_L: (\wS, \wvphi)\to(S, \vphi)$ represent the branched covering. 
Put $P:=p(S_0\cap L)$, $\widetilde{P}=\pi^{-1}(P)$ and $\widetilde{L}:=\pi^{-1}(L)$.
Let $\widetilde{C}$ be a connected component of $\pi^{-1}(C)$. 
By Corollary~\ref{cor:covering-right-veering}. 
$\widetilde{\phi}=f[\wvphi] \in  \MCG(\wS)$ is non-right-veering with respect to $\widetilde C$.  Hence \cite[Theorem 1.1]{HKM} implies that $(\widetilde{S},\widetilde{\phi})$ supports an overtwisted contact structure.

We temporarily forget the marked points $\wP$ to view $\wvphi:(\wS,\wP) \to (\wS,\wP)$ simply as $\wvphi : \wS \rightarrow \wS$. 
Since $\wvphi$ represents $\wphi$, which is non-right-veering, we note that $\wvphi : \wS \rightarrow \wS$ is non-right-veering. 
(In general $[\wvphi]\in\MCG(\wS,\wP)$ being non-right-veering does not mean $f[\wvphi]\in\MCG(\wS)$ being non-right-veering.)
Using Honda, Kazez and Mati\'c's construction \cite{HKM} we can find a sequence of oriented arcs $\wvphi(\alpha) = \alpha_1,\ldots,\alpha_n=\alpha$ in $\wS$
starting at the same point on $\widetilde C$ and satisfying the following:
\begin{itemize}
\item $\Int(\alpha_i)\cap \Int(\alpha_{i+1})=\emptyset$ for every $i=1, \dots, n-1,$ 
\item $\wvphi(\alpha) = \alpha_1 \prec_{\sf right} \alpha_2 \prec_{\sf right} \cdots \prec_{\sf right} \alpha_n=\alpha.$
\end{itemize}
Here $\alpha_{i} \prec_{\sf right} \alpha_{i+1}$ means that the arc $\alpha_{i+1}$ lies on the right side of $\alpha_{i}$ in a small neighborhood of the starting point and that $\alpha_{i}$ and $\alpha_{i+1}$ attain the minimum geometric intersection among their isotopy classes in $\wS$.

Without loss of generality, we may assume that $\alpha_i \subset \wS \setminus \wP$. Then arcs $\alpha_i$ and $\alpha_{i+1}$, viewed as arcs in $\wS \setminus \wP$, still attain the minimum geometric intersection among their isotopy classes in $\wS \setminus \wP$.
The existence of such arcs in $\wS \setminus \wP$ shows that the distinguished monodromy $[\wvphi_{\widetilde{L}}]=[\wvphi] \in \MCG(\widetilde{S},\widetilde{P})$ of $\widetilde{L}$ is not quasi-right-veering. See \cite[Section 3]{ik-qveer} for the definition and basic properties of quasi-right-veering. 
Thanks to \cite[Theorem 4.1]{ik-qveer} which states that; 
{\em A transverse link $\mathcal{T}$ in a contact 3-manifold $(M,\xi)$ is non-loose if and only if every braid representative of $\mathcal{T}$ with respect to every open book decomposition  of $(M,\xi)$ is quasi-right-veering,} 
we conclude that $\widetilde\cL$ is loose. 
\end{proof}

\begin{definition}
 We say that a non-loose transverse link $\mathcal{T}$ is \emph{virtually loose} if its complement is virtually overtwisted, that is, it admits a (non-branched) finite covering which is overtwisted. Otherwise, we say that $\mathcal{T}$ is \emph{universally non-loose}.
\end{definition}

It is well-known that non-right-veering open books support overtwisted contact structures. However, non-right-veering braids do not represent loose transverse links in general. 
Theorem \ref{theorem:virtually-loose}, together with the following observation, leads to connection between right-veering closed braids and universally non-loose transverse links.

Let $\mathcal{T}=\mathcal{T}_1\cup \cdots \cup \mathcal{T}_n$ be an $n$-component transverse link in a contact 3-manifold $(M,\xi)$, and let $[\mu_{i}] \in  H_1(M\setminus \mathcal{T};\Z)$ be the homology class represented by a meridian of the $i$-th component $\mathcal{T}_i$. For $d>1$, let $$e_d: \pi_{1}(M\setminus \mathcal{T}) \rightarrow \Z \slash d\Z$$ be the homomorphism obtained by composing the Hurewicz homomorphism $\pi_{1}(M\setminus \mathcal{T}) \rightarrow H_1(M\setminus \mathcal{T};\Z)$ and $p_d: H_1(M\setminus \mathcal{T};\Z) \cong H_{1}(M;\Z) \oplus \bigoplus_{i=1}^{n} \Z[\mu_i] \rightarrow \Z \slash d\Z$ defined by $p_d(x)=0$ for $x \in H_1(M;\Z)$ and $p_d([\mu_i])=1$ for all $i=1,\ldots,n$.

\begin{definition}
The \emph{standard $d$-fold cyclic branched covering} of $\mathcal{T}$ is a contact branched covering $\pi:(\widetilde{M},\widetilde{\xi})\to (M, \xi)$ such that the restriction $\pi: \widetilde{M} \setminus \widetilde{\mathcal{T}} \rightarrow M\setminus \mathcal{T}$ is a usual covering that corresponds to $\textrm{Ker}\,e_d$.

We say that a branched covering $\pi_L:(\widetilde{S},\widetilde{\varphi}) \rightarrow (S,\varphi)$ branched along $L$ is \emph{standard $d$-fold cyclic} if it gives rise to a standard $d$-fold cyclic branched covering $\pi:(\widetilde{M},\widetilde{\xi})\to (M, \xi)$.
\end{definition}

Let $(S,\varphi)$ be an abstract open book supporting $(M,\xi)$ and $L$ be a closed $m$-braid in $(S,\varphi)$ which represents $\mathcal{T}$. Let $P:=p(S_0\cap L)=\{p_1,\ldots,p_m\}$ and $[c_i] \in H_{1}(S\setminus P ;\Z)$ be a homology class represented by a loop around the point $p_i \in P$. 
For $d>1$, let $$e'_d: \pi_{1}(S \setminus P) \rightarrow \Z \slash d\Z$$ be a homomorphism defined by the composing the Hurewicz homomorphism $\pi_{1}(S\setminus P) \rightarrow H_1(S\setminus P;\Z) $ and $p'_d: H_1(S\setminus P;\Z)\cong H_{1}(S;\Z) \oplus \bigoplus_{i=1}^{n} \Z[c_i] \rightarrow \Z \slash d\Z$ defined by $p'_d(x)=0$ for $x \in H_1(M;\Z)$ and $p'_d([c_i])=1$ for $i=1,\ldots, m$.
Then $\pi_L:(\widetilde{S},\widetilde{\varphi}) \rightarrow (S,\varphi)$ is a standard $d$-fold cyclic branched open book covering if and only if the covering $\pi: (\widetilde{S}, \widetilde P) \rightarrow (S, P)$ 
is the $d$-fold cyclic branched covering that corresponds to $\textrm{Ker}\,e'_d$.

Since the homomorphism $e'_d:\pi_{1}(S \setminus P) \rightarrow \Z\slash d\Z$ is invariant under the induced map 
$\varphi_*:\pi_{1}(S \setminus P) \rightarrow \pi_{1}(S \setminus P)$,  
the standard $d$-fold cyclic branched open book covering always exists; for every $\varphi:(S,P)\rightarrow (S,P)$ there is a lift $\wvphi:(\widetilde{S},\widetilde{P})\rightarrow (\widetilde{S},\widetilde{P})$. 

Consequently, taking the standard $d$-fold cyclic branched covering in Theorem \ref{theorem:virtually-loose} we obtain a sufficient condition of virtually loose: 

\begin{corollary}
\label{cor:virtually-loose2}
Let $\mathcal{T}$ be a transverse link in a contact 3-manifold $(M,\xi)$. If there exists an open book decomposition $(S,\phi)$ of $(M,\xi)$ and a non-right-veering closed braid $\cL$ in $(S,\phi)$ representing $\mathcal{T}$, then $\mathcal{T}$ is virtually loose. 

In other words, if $\mathcal{T}$ is universally non-loose then every closed braid representative of $\mathcal{T}$, with respect to every open book decomposition of $(M,\xi)$, is right-veering.
\end{corollary}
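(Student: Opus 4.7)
The plan is to deduce the corollary as a direct consequence of Theorem \ref{theorem:virtually-loose} applied to a standard cyclic branched covering, together with the observation that outside the branch locus this covering is an honest (unbranched) finite covering.

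Given a non-right-veering braid representative $\cL = [((S,\varphi),L)]$ of $\mathcal{T}$ in an open book $(S,\phi)$ of $(M,\xi)$, I would first fix any integer $d \geq 2$ and form the standard $d$-fold cyclic branched open book covering $\pi_L : (\wS, \wvphi) \to (S, \vphi)$. The discussion preceding the corollary already guarantees that such a cover exists because the homomorphism $e'_d$ is $\vphi_*$-invariant. Because every cyclic branched cover is fully ramified (each point of $\widetilde P$ lies over a point of $P$ with ramification index $d > 1$), the hypotheses of Theorem \ref{theorem:virtually-loose} are satisfied.

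Applying that theorem gives the conclusion: $(\wS, \wphi)$ supports an overtwisted contact structure $\xi_{(\wS,\wphi)}$, and the lift $\widetilde\cL$ represents a loose transverse link $\widetilde{\mathcal T}$ in the contact branched cover $(\widetilde M, \widetilde\xi) = (M_{(\wS,\wphi)}, \xi_{(\wS,\wphi)})$. By definition of loose, the complement $\widetilde M \setminus \widetilde{\mathcal T}$ is overtwisted. But by construction of a standard cyclic branched covering, the restriction $\pi : \widetilde M \setminus \widetilde{\mathcal T} \to M \setminus \mathcal T$ is an unbranched finite covering of degree $d$. Hence $M \setminus \mathcal T$ admits a finite unbranched overtwisted cover, which is exactly the statement that $\mathcal T$ is virtually loose. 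The ``in other words'' form is the contrapositive.

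I do not anticipate a serious obstacle here; the work has already been done in Theorem \ref{theorem:virtually-loose}. The only point to check carefully is that the contact structure $\widetilde\xi$ produced by Geiges' construction on $\widetilde M$ agrees with the open-book-supported $\xi_{(\wS,\wphi)}$ away from the branch locus, so that ``$\widetilde{\mathcal T}$ is loose in $\widetilde\xi$'' translates into ``the unbranched covering $\widetilde M \setminus \widetilde{\mathcal T} \to M \setminus \mathcal T$ is overtwisted.'' This compatibility is furnished by Proposition \ref{prop:well-defined}(ii), so the reduction is immediate.
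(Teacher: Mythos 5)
Your proposal is correct and takes essentially the same route as the paper: the authors likewise obtain the corollary by applying Theorem \ref{theorem:virtually-loose} to the standard $d$-fold cyclic branched covering (which exists by the $\varphi_*$-invariance of $e'_d$ and is fully ramified since it is cyclic), and then noting that away from the branch locus this restricts to an unbranched finite covering of the complement, so looseness of the lift gives virtual looseness of $\mathcal{T}$.
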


By analogy with the relation between tightness and right-veering-ness  it is interesting to ask the converse: 
\begin{question}
\label{question:main}
Let $\mathcal{T}$ be a transverse link in a contact 3-manifold $(M,\xi)$. 
Is it true that $\mathcal{T}$ is universally non-loose if and only if every closed braid representative $\cL$ of $\mathcal T$ with respect every open book decomposition of $(M, \xi)$ is right-veering?
\end{question}

\end{document}